\pgfplotsset{compat = newest}
\newtheorem{proposition}{Proposition}
\newtheorem{theorem}[proposition]{Theorem}
\newtheorem{lemma}[proposition]{Lemma}
\theoremstyle{remark}
\theoremstyle{definition}
\numberwithin{equation}{section}
\numberwithin{proposition}{section}
\numberwithin{table}{section}
\title[Uniqueness of weak solutions for Hamilton-Jacobi equations]{Uniqueness of semi-concave weak Solutions for Hamilton-Jacobi Equations}
\author[V. \ ISSA]{Victor Issa}
\address[Victor Issa]{\'Ecole Normale Sup\'erieure de Lyon, Lyon, France}
\begin{document}

\begin{abstract}
It is well known that when the nonlinearity is convex, the Hamilton-Jacobi PDE admits a unique semi-convex weak solution, which is the viscosity solution. In this paper, motivated by problems arising from spin glasses, we show that if the Hamilton-Jacobi PDE with strictly convex nonlinearity and regular enough initial condition admits a semi-concave weak solution, then this solution is the viscosity solution.
\end{abstract}

\maketitle


\section{Introduction}
It is well known that the Hamilton-Jacobi equation with convex nonlinearity admits a unique semi-convex weak solution \cite{Barles,Evans}, and that this unique solution is equal to the viscosity solution. More recently, it was shown that the Hamilton-Jacobi equation with arbitrary nonlinearity and smooth initial condition admits at most one convex weak solution \cite{SIFRT} and that it is equal to the viscosity solution. Let $\mathcal{C}^{1,1}$ denote the space of differentiable functions $\mathbb{R}^d \to \mathbb{R}$ with Lipschitz gradient. In the present document, we prove an analog of those results for semi-concave weak solutions.

\begin{theorem}  \label{t.uniqueness}

Let $T^* \in  [0,+\infty)$, $H : \mathbb{R}^d \to \mathbb{R}$ be a $\mathcal{C}^2$ strictly convex function and let $f : [0,T^*] \times \mathbb{R}^d \to \mathbb{R}$ be a Lipschitz function. Assume that $u_0 = f(0,\cdot)$ is $\mathcal{C}^{1,1}$, the function $f$ satisfies $\partial_ t f - H(\nabla_x f) = 0$ almost everywhere and there exists a positive constant $c > 0$ such that for all $t \in [0,T^*]$ the function $x \mapsto c|x|^2-f(t,x)$ is convex. Then $f$ coincides on $[0,T^*] \times \mathbb{R}^d$ with the unique viscosity solution of
\begin{equation} \label{e.HJ}
    \begin{cases}
\partial_ t u - H(\nabla_x u) = 0 \\
u(0,\cdot)= u_0.
\end{cases}
\end{equation}
\end{theorem}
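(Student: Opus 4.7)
The plan is to show $f = u$, where $u$ denotes the unique viscosity solution of~\eqref{e.HJ}. Under our hypotheses, the viscosity solution admits the Hopf--Lax representation
\[
u(t,x) = \sup_{y \in \mathbb{R}^d}\bigl\{u_0(y) - t\, H^*\bigl((y-x)/t\bigr)\bigr\},
\]
with $H^*$ the Legendre transform of $H$. I would prove the two inequalities $f \ge u$ and $f \le u$ separately.

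For $f \ge u$, the first step is to show that $f$ is a viscosity super-solution of~\eqref{e.HJ}. Mollifying $f$ in space against a smooth mollifier $\rho_\varepsilon$ and writing $f_\varepsilon := f \ast \rho_\varepsilon$, the almost-everywhere identity $\partial_t f = H(\nabla_x f)$ combined with Jensen's inequality and the convexity of $H$ yields $\partial_t f_\varepsilon \ge H(\nabla_x f_\varepsilon)$ pointwise. Hence each $f_\varepsilon$ is a classical super-solution, and stability of viscosity super-solutions under uniform convergence shows that $f$ is a viscosity super-solution; the standard comparison principle then gives $f \ge u$.

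For the reverse inequality $f \le u$ I would use the method of characteristics. Fix $(t_0,x_0) \in (0,T^*]\times \mathbb{R}^d$ and let $y_0$ be a maximizer of the Hopf--Lax supremum; the first-order optimality condition, well-defined thanks to $u_0 \in \mathcal{C}^{1,1}$, reads $y_0-x_0 = t_0\,\nabla H(p_0)$ with $p_0 := \nabla u_0(y_0)$. By the Hopf--Lax semigroup property, $y_0$ remains a maximizer at every $(s,\gamma(s))$ along the characteristic $\gamma(s) := y_0 - s\,\nabla H(p_0)$, yielding the explicit formula $u(s,\gamma(s)) = u_0(y_0) - s\,H^*(\nabla H(p_0))$. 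Setting $g(s) := f(s,\gamma(s)) - u(s,\gamma(s))$, one has $g \ge 0$ and $g(0)=0$; a Fubini argument (valid for almost every $y_0$) shows that the PDE for $f$ holds along $\gamma$ at a.e.~$s$, yielding
\[
g'(s) = H(p_f(s)) - H(p_0) - (p_f(s)-p_0)\cdot \nabla H(p_0), \qquad p_f(s):=\nabla_x f(s,\gamma(s)).
\]
Taylor expansion of $H\in\mathcal{C}^2$ gives $g'(s) \le C_1|p_f(s)-p_0|^2$.

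The main obstacle is to establish the reciprocal bound $|p_f(s)-p_0|^2 \le C_2\,g(s)$; together with the previous estimate and $g(0)=0$, Gronwall's inequality would then force $g \equiv 0$, hence $f(t_0,x_0)=u(t_0,x_0)$. I would derive the reciprocal bound by testing the inequality $f\ge u$ on perturbed characteristics $\gamma_z(s) := z - s\,\nabla H(\nabla u_0(z))$ for $z$ near $y_0$. The semi-concavity of $f$ in $x$ provides the upper estimate $f(s,\gamma_z(s)) \le f(s,\gamma(s)) + p_f(s)\cdot(\gamma_z(s)-\gamma(s)) + c|\gamma_z(s)-\gamma(s)|^2$, while the Hopf--Lax lower bound $u(s,\gamma_z(s)) \ge u_0(z) - s\,H^*(\nabla H(\nabla u_0(z)))$ combined with a quadratic Taylor expansion around $z=y_0$ (justified by $u_0\in\mathcal{C}^{1,1}$ and $H\in \mathcal{C}^2$, together with the identity $v(q)-v(p_0) = p_0\cdot \nabla^2 H(p_0)(q-p_0)+O(|q-p_0|^2)$ for $v := H^*\circ \nabla H$) rearranges to $u(s,\gamma_z(s)) \ge u(s,\gamma(s)) + p_0\cdot(\gamma_z(s)-\gamma(s)) + O(|\gamma_z(s)-\gamma(s)|^2)$. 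Chaining these via $f\ge u$ at $\gamma_z(s)$ and optimizing the displacement in the direction of $p_0-p_f(s)$ produces the desired reciprocal bound. The key technical challenge is uniformity in $s \in [0,t_0]$: the map $z\mapsto \gamma_z(s)$ must remain a local diffeomorphism near $y_0$, which may fail past conjugate times of the characteristic flow; handling this likely requires either a continuation argument on short time intervals where invertibility is preserved, or exploiting the $\mathcal{C}^{1,1}$ regularity inherited by $f$ on each such subinterval to iterate the short-time argument up to $T^*$.
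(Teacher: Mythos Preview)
Your approach is correct in outline and takes a genuinely different route from the paper. Both reduce to a short-time result and iterate (your final paragraph is exactly the paper's reduction of Theorem~\ref{t.uniqueness} to a short-time version, Theorem~\ref{t.wuniqueness}). For $f \geq u$, your Jensen/mollification argument is simpler than the paper's characteristic computation and in fact yields the inequality globally on $[0,T^*]$; one small point is to mollify in $(t,x)$ jointly so that $f_\varepsilon$ is genuinely smooth rather than merely Lipschitz in $t$. The real divergence is in the step $f \leq u$. The paper constructs generalized characteristics of $f$---integral solutions of $\dot\gamma = -\nabla H(\nabla_x f(t,\gamma))$---via a Young-measure limit of regularized flows (semi-concavity of $f$ bounds the Jacobian from below), shows $(f-u)$ is non-increasing along them, and then uses \emph{strict} convexity of $H$ to identify these curves with the characteristics of $u$, which cover space. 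Your route stays on the characteristics of $u$ throughout and closes the loop with the quantitative reciprocal estimate $|\nabla_x f - \nabla_x u|^2 \leq C(f-u)$; this follows directly from semi-concavity of $f$, the known uniform semi-convexity of the viscosity solution $u$, and $f \geq u$---indeed you do not even need the perturbed characteristics $\gamma_z$, since any spatial displacement of $\gamma(s)$ already gives the bound. Your argument is more elementary (no Young measures, and strict convexity of $H$ is never invoked), while the paper's route yields, as a byproduct, the existence and identification of the generalized flow of $-\nabla H(\nabla_x f)$.
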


If a function is both semi-concave and semi-convex on $\mathbb{R}^d$, then it is $\mathcal{C}^{1,1}$ \cite[Corollary 3.3.8]{scfunctions}.
In Theorem \ref{t.uniqueness}, since $f$ is equal to the viscosity solution, it is both semi-concave and semi-convex. As a consequence, $f$ is $\mathcal{C}^{1,1}$ and so $f$ is a classical solution of \eqref{e.HJ}. 

Let $\mathcal{P}_2(\mathbb{R}_+)$ be the set of Borel probability measures on $\mathbb{R}_+$ with finite second moment. In \cite{hopflaxwar}, it was pointed out that the limit free energy of the SK model is related to the viscosity solution of
\begin{equation} \label{e.HJW}
      \begin{cases}
          \partial_t p - \int \left( \partial_\mu p \right)^2 d\mu = 0 &\textrm{on } \mathbb{R}_+ \times \mathcal{P}_2(\mathbb{R}_+),\\ 
          p(0,\cdot) = \psi &\textrm{on } \mathcal{P}_2(\mathbb{R}_+),
      \end{cases}
\end{equation}
where $\psi$ is the cascade transform of the uniform measure on $\{-1,1\}$ \cite[Definition~3.2]{JC1}. More precisely, let $p$ be the viscosity solution of \eqref{e.HJW}, the Parisi formula \cite{panchenko,PhysRevLett.43.1754} at inverse temperature $\beta$ for the SK model may be written \cite[Theorem 1.1]{hopflaxwar}
\begin{equation} \label{e.parisi}
    \lim_{N \to \infty} F_N(\beta) = -p(\beta^2/2,\delta_0) +\frac{\beta^2}{2} + \log 2.
\end{equation}
According to this observation, it may be possible to prove the Parisi formula relying mainly on PDE tools. Devising such a proof was attempted in \cite{JC1}, but the method only yielded the following tight bound, 
\begin{equation} \label{e.bound}
    \lim_{N \to \infty} F_N(\beta) \leq -p(\beta^2/2,\delta_0) +\frac{\beta^2}{2} + \log 2.
\end{equation}
One of the limitations that prevented a complete proof of the Parisi formula in \cite{JC1} was the lack of a selection principle applicable in the context of spin glasses. We hope that the selection principle Theorem \ref{t.uniqueness} may generalize to the Hamilton-Jacobi equations that arise when studying the SK model, as well as more general spin glass models with convex covariance structures. The PDE point of view proposed in \cite{hopflaxwar} can also yield results in the context of non-convex spin glasses. For example, an analog of \eqref{e.bound} has been proven this way for the bipartite model \cite{JC2}. For this type of models, the free energy has yet to be rigorously identified. In Section~\ref{s.counterexample}, we give an example of a non-convex nonlinearity for which Theorem~\ref{t.uniqueness} fails. Some new ideas are needed, to further develop this method in the context of non-convex spin glasses.

\section*{Notation} 
For every $x \in \mathbb{R}^d$, we denote by $x^i$ the $i$-th coordinate of $x$. Given $R > 0$, we denote by $B_R$ the ball of center $0$ and radius $R$ in $\mathbb{R}^d$, with respect to the sup norm. Let $g : \mathbb{R}_+ \times \mathbb{R}^d \to \mathbb{R}$ be a differentiable function and $t \in \mathbb{R}_+$, we denote by $g^t$ the partial function $g(t,\cdot)$. The gradient of $g$ with respect to the space variable $x$ is denoted by $\nabla_x g$. In particular, $\nabla_x g  : \mathbb{R}_+ \times \mathbb{R}^d \to \mathbb{R}^d$. The notation $\nabla$ is reserved for the gradient with respect to all the variables. For example, here we have $\nabla g = (\partial_t g, \nabla_x g)$. We denote by $\mathcal{L}^d$ the Lebesgue measure on $\mathbb{R}^d$. If $f$ is a Lipschitz function, we denote by Lip$(f)$ the optimal Lipschitz constant of $f$. The push forward of a measure $\nu$ by a function $f$ is written $f_*\nu$. When a measure $\nu_1$ is absolutely continuous with respect to another measure, $\nu_2$ we write $\nu_1 \ll \nu_2$. For $\varepsilon >0$ we denote by $\eta_\varepsilon$ a molifier, for example $\eta_\varepsilon(x) = \frac{1}{\varepsilon^d} \eta(\frac{x}{\varepsilon})$ where 
\begin{equation*} \label{d.molifier}
    \eta(x) = \begin{cases}
        C \exp \left( \frac{1}{|x|^2-1} \right) &\mathrm{ if } \; |x| < 1, \\ 
        0 &\mathrm{ otherwise},
        \end{cases}
\end{equation*}
and $C$ is the constant such that $\int \eta(x) dx = 1$.
\section{Proof Sketch} \label{s.proofsketch}

In this section, we will outline the strategy we will use to prove Theorem \ref{t.uniqueness}. The following seemingly weaker result actually implies Theorem~\ref{t.uniqueness}.

\begin{theorem}  \label{t.wuniqueness}

Let $T^* \in [0,+\infty)$, $H : \mathbb{R}^d \to \mathbb{R}$ be a $\mathcal{C}^2$ strictly convex function and let $f : [0,T^*] \times \mathbb{R}^d \to \mathbb{R}$ be a Lipschitz function. Assume that $u_0 = f(0,\cdot)$ is $\mathcal{C}^{1,1}$, the function $f$ satisfies $\partial_ t f - H(\nabla_x f) = 0$ almost everywhere and there exists a positive constant $c > 0$ such that for every $t \in [0,T^*]$ the function $x \mapsto c|x|^2-f(t,x)$ is convex. Then, there exists $T \in (0,T^*]$ such that $f = u$ on $[0,T) \times \mathbb{R}^d$ where $u$ is the viscosity solution of \eqref{e.HJ}.

\end{theorem}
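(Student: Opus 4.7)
My plan is to split the proof into three steps. First, since $u_0 \in \mathcal{C}^{1,1}$ and $H \in \mathcal{C}^2$, the method of characteristics provides some $T \in (0, T^*]$ such that the viscosity solution $u$ of \eqref{e.HJ} coincides with a classical $\mathcal{C}^{1,1}$ solution on $[0, T) \times \mathbb{R}^d$; I restrict to this strip.

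Second, I prove $f \geq u$ by mollification. Let $f_\varepsilon(t, x) := (f(t, \cdot) \ast \eta_\varepsilon)(x)$. Since $\partial_t f = H(\nabla_x f)$ almost everywhere, $\partial_t f_\varepsilon = (H(\nabla_x f))_\varepsilon$, and by Jensen's inequality and the convexity of $H$,
\begin{equation*}
    (H(\nabla_x f))_\varepsilon \geq H((\nabla_x f)_\varepsilon) = H(\nabla_x f_\varepsilon),
\end{equation*}
so $f_\varepsilon$ is a supersolution of \eqref{e.HJ}. Since $u_0 \in \mathcal{C}^{1,1}$ and $\eta$ is even, a Taylor expansion gives $(u_0)_\varepsilon = u_0 + O(\varepsilon^2)$ uniformly. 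The comparison principle then yields $f_\varepsilon \geq u - C\varepsilon^2$, and sending $\varepsilon \to 0$ gives $f \geq u$ on $[0, T) \times \mathbb{R}^d$.

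Third, I prove $f \leq u$. Fix $(t_0, x_0) \in (0, T) \times \mathbb{R}^d$ and set $p := \nabla_x u(t_0, x_0)$, $v := \nabla H(p)$, $y_0 := x_0 + t_0 v$. The classical backward characteristic of $u$ from $(t_0, x_0)$ to $(0, y_0)$ is the line $X(s) := x_0 + (t_0 - s) v$, $s \in [0, t_0]$, along which $\nabla_x u \equiv p$; the Hopf--Lax formula gives $u(t_0, x_0) = u_0(y_0) - t_0 L(v)$ with $L := H^*$. Setting $\phi(s) := f(s, X(s)) - u(s, X(s))$, we have $\phi \geq 0$ (by the previous step), $\phi(0) = 0$, and at every $s$ where $f$ is differentiable at $(s, X(s))$,
\begin{equation*}
    \phi'(s) = H(\nabla_x f(s, X(s))) - H(p) - \nabla H(p) \cdot (\nabla_x f(s, X(s)) - p),
\end{equation*}
which is nonnegative by the convexity of $H$ and vanishes if and only if $\nabla_x f(s, X(s)) = p$ by the strict convexity. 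Hence $\phi(t_0) = 0$, i.e.\ $f(t_0, x_0) \leq u(t_0, x_0)$, will follow from the identity $\nabla_x f(s, X(s)) = p$ for almost every $s \in [0, t_0]$.

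The core difficulty is propagating this identity from $s = 0$ (where it holds automatically since $u_0 \in \mathcal{C}^{1,1}$ and $\nabla u_0(y_0) = p$) to almost every $s \in (0, t_0]$ along the Lebesgue-null line $X$; the a.e.\ differentiability of $f$ on $(0, T) \times \mathbb{R}^d$ does not by itself furnish differentiability along $X$ at a set of positive $1$-dimensional measure. I plan to exploit the semi-concavity of $f$ together with the strict convexity of $H$, in the spirit of the theory of generalized characteristics for semi-concave functions (Cannarsa--Sinestrari): semi-concavity constrains the spatial superdifferential $\partial_x^+ f$ along $X$, and combined with the a.e.\ equation and strict convexity, this should force $\nabla_x f \equiv p$ almost everywhere on $X$. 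Transferring pointwise information at $s = 0$ into almost-everywhere information along this thin characteristic is the principal technical obstacle.
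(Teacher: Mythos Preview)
Your Step~2 (the lower bound $f\ge u$ via Jensen and the comparison principle) is a pleasant alternative to the paper's argument, which instead tracks $f-u$ along the characteristics of the $\mathcal{C}^{1,1}$ solution $u$. You should, however, check carefully that the Lipschitz-in-$t$, smooth-in-$x$ function $f_\varepsilon$ is a genuine \emph{viscosity} supersolution before invoking comparison; satisfying the inequality only almost everywhere is not automatically sufficient.

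The real gap is Step~3. Along the $u$-characteristic $X$ you compute
\[
\phi'(s)=H(\nabla_x f)-H(p)-\nabla H(p)\cdot(\nabla_x f-p)\ \ge\ 0,
\]
which is exactly the inequality that underlies the lower bound; it points the wrong way for an upper bound. Your remedy is to force $\phi'\equiv 0$ by showing $\nabla_x f(s,X(s))=p$ for a.e.\ $s$, but you propose no mechanism for propagating this identity past $s=0$. Semi-concavity of $f$ constrains $\partial_x^+ f$ pointwise, not dynamically along a curve chosen by $u$; and the Cannarsa--Sinestrari theory you invoke concerns generalized characteristics selected by $\partial^+ f$ itself, not by $\nabla_x u$. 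Nothing in your setup rules out $\phi$ leaving zero at some $s_0>0$ and increasing thereafter, so the argument as written is circular.

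The paper resolves this by reversing the roles: it works along the characteristics of $f$, i.e.\ integral curves of $\dot\gamma=-\nabla H(\nabla_x f(t,\gamma))$. Along these the same convexity computation, with $f$ and $u$ swapped, makes $t\mapsto(f-u)(t,\gamma(t))$ \emph{non-increasing}; together with $f\ge u$ this forces $f=u$ on every $f$-characteristic, strict convexity then gives $\nabla_x f=\nabla_x u$ there, so the $f$-characteristics coincide with the surjective $u$-characteristics $W(\cdot,x)$, and $f=u$ everywhere on $[0,T)\times\mathbb{R}^d$. The cost is that $\nabla_x f$ is not Lipschitz, so these curves must be constructed: the paper mollifies, uses the semi-concavity of $f$ to bound $\mathrm{div}\bigl(-\nabla H(\nabla_x f_\varepsilon)\bigr)$ from below (hence to control the Jacobian of the approximate flow uniformly in $\varepsilon$), and passes to a Young-measure limit supported on integral solutions of the limiting ODE. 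This construction is where the semi-concavity hypothesis does its essential work, and it is the missing idea in your outline.
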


Note that the hypotheses on $H$ and $u_0$ in Theorems \ref{t.uniqueness} and \ref{t.wuniqueness} guarantee the existence and uniqueness of a viscosity solution \cite[Theorem 7.1]{Barles}. Furthermore, the viscosity solution is Lipschitz and semi-convex \cite[Theorems 8.2 \&~8.3]{Barles}. We will use those facts during the proofs of Theorems \ref{t.uniqueness} and \ref{t.wuniqueness}. Let us start by proving that Theorem \ref{t.uniqueness} is a consequence of Theorem \ref{t.wuniqueness}. 
 
\begin{proof}[Proof of Theorem \ref{t.uniqueness} using Theorem \ref{t.wuniqueness}]
Suppose Theorem \ref{t.wuniqueness} is true, let $u$ be the viscosity solution of \eqref{e.HJ} and define
\begin{equation*}
    T_0 = \sup \{T \in [0,T^*), \, f=u \textrm{ on } [0,T) \times \mathbb{R}^d \}.
\end{equation*}
Suppose $T_0 < T^*$, by definition of $T_0$ we have $f = u$ on $[0,T_0) \times \mathbb{R}^d$ and even on $[0,T_0] \times \mathbb{R}^d$ since both $u$ and $f$ are Lipschitz. Let $u_1= f(T_0,\cdot) = u(T_0,\cdot)$ ,since $f(T_0,\cdot)$ is semi-concave and $u(T_0,\cdot)$ is semi-convex, the function $u_1$ is $\mathcal{C}^{1,1}$. Hence, $(t,x) \mapsto f(T_0+t,x)$ satisfies the hypothesis of Theorem \ref{t.wuniqueness}, so there exists $T > 0$ such that $f(T_0+t,x) = u(T_0+t,x)$ on $[0,T) \times \mathbb{R}^d$. Thus, $f = u$ on $[0,T_0+T) \times \mathbb{R}^d$ which contradicts the definition of $T_0$. In conclusion, under Theorem \ref{t.wuniqueness} we must have $T_0 = T^*$, which means that $f = u$.
\end{proof}

Let us now sketch the proof of Theorem \ref{t.wuniqueness}. We will proceed in two steps, and use arguments involving characteristic curves. If $g$ is a smooth solution of \eqref{e.HJ}, then differentiating \eqref{e.HJ} with respect to the $i$-th space coordinate yields
\begin{equation}
    \partial_t \partial_{x_i} g - \nabla H(\nabla_x g) \cdot \nabla_x(\partial_{x_i} g) = 0. 
\end{equation}
Hence, $\partial_{x_i} g$ solves a transport equation, with a vector field that depends on $g$. In particular, we may learn new information on $g$ by studying the solutions of the associated characteristic ordinary differential equation $\dot{\phi} = -\nabla H(\nabla_x g(t,\phi))$.

The first step of the proof of Theorem \ref{t.wuniqueness} will be to show that there exists $T \in (0,T^*]$ such that $f \geq u$ on $[0,T) \times \mathbb{R}^d$. This is done by studying the behavior of $f$ and $u$ along the solutions of 
\begin{equation} \label{e.ODEuu}
    \begin{cases}
    \dot{\phi} = -\nabla H(\nabla_x u(t,\phi)), \\
    \phi(0) = x,
    \end{cases}
\end{equation}
and showing that if $\phi$ is a solution of \eqref{e.ODEuu} then $t \mapsto (f-u)(t,\phi(t))$ is non-decreasing. The viscosity solution is $\mathcal{C}^{1,1}$ on $[0,T_0)$ for some $T_0 > 0$, so the existence of solutions to \eqref{e.ODEuu} in short time is not an issue for this step of the proof.

For the second step of the argument, we will use the same idea. However, this time we will show that $f-u$ is non-increasing along the solutions of
\begin{equation} \label{e.ODE}
    \begin{cases}
\dot{\phi} = -\nabla H(\nabla_xf(t,\phi)), \\
\phi(0) = x.
\end{cases}
\end{equation}
Here the existence of solutions to \eqref{e.ODE} is not clear since the right-hand side is not fully Lipschitz. This kind of problem has already been investigated in \cite{bvequation,onesided,LionsSeeger}. We will give a fully self-contained existence argument, relying on ideas borrowed from \cite[Theorem 6.2]{bvequation}. The precise construction is given in Section \ref{s.constructionofx} and uses some results from the theory of Young measures (see Theorem \ref{t.youngmeasure}). To construct a solution of \eqref{e.ODE}, we will build solutions of problems that approximate \eqref{e.ODE} and have smooth right-hand sides. Then, using the semi-concavity of $f$, We will check that this sequence of solutions converges to a solution of \eqref{e.ODE} in an appropriate sense.

Once the existence of solutions is obtained, we can show that $f \leq u$ on the set $E$ of points of $[0,T) \times \mathbb{R}^d$ reached by the solutions of \eqref{e.ODE} built previously. Finally, since we already know that $f \geq u$ on $[0,T) \times \mathbb{R}^d$, we will deduce that $f = u$ on $E$, and using uniqueness of solutions in short time for \eqref{e.ODEuu} we will deduce that $E = [0,T) \times \mathbb{R}^d$ which will end the proof of Theorem \ref{t.wuniqueness}. This final part is treated in Section \ref{s.endproof}. Note, if $c$ is a semi-concavity constant of $f$ and $H(p) = |p|^2/2$, then  the right-hand side of \eqref{e.ODE} satisfies, 
\begin{equation} \label{e.ouronesided}
    \left( -\nabla_xf(t,y) - (-\nabla_xf(t,x)) \right) \cdot \left( y - x\right) \geq -c|y - x|^2.
\end{equation}
This means that the distance between two solutions of \eqref{e.ODE} started at $x$ and $y$ is lower bounded by $|x-y|\exp(\frac{-ct}{2})$. So, the solutions of \eqref{e.ODE} tend to repel each other.
\section{Lower bound} \label{s.firstineq}

There exists a short time $T_0 \in (0,T^*]$ such that $u$ is $\mathcal{C}^{1,1}$ on $[0,T_0) \times \mathbb{R}^d$ \cite[Proposition 12.1]{PLL}. In this section, we will use this smoothness result to prove the existence of $T \in (0,T^*]$ such that $f \geq u$ on $[0,T) \times \mathbb{R}^d$. Define $a = - \nabla H (\nabla_x u)$ so that the space derivatives of $u$ satisfy the following almost everywhere on $[0,T_0) \times \mathbb{R}^d$,
\begin{equation}
    \partial_t (\partial_{x_i} u) + a \cdot \nabla \partial_{x_i} u = 0.
\end{equation}
We are interested in the characteristic equation of this PDE, namely 
\begin{equation} \label{e.ODEu}
    \dot{\phi}(t) = a(t,\phi(t)).
\end{equation}
Let us now define $T = \min(T_0, 1/\mathrm{Lip} (\nabla H(\nabla u_0)))$, for every $t < T$ and $x \in \mathbb{R}^d$, let $W(t,x) = x - t \nabla H (\nabla u_0(x))$. For every $t < T$, the map $W^t$ is bi-Lipschitz and equal to $id_{\mathbb{R}^d}$ up to a Lipschitz perturbation with Lipschitz constant smaller than $1$. From this, we deduce the following lemma, which will enable us to prove that $f-u$ is a decreasing function of time along $W$.

\begin{lemma} \label{l.propw}
On $[0,T)$, the push forward of $\mathcal{L}^{d+1}$ by $(t,x) \mapsto (t,W(t,x))$ is absolutely continuous with respect to $\mathcal{L}^{d+1}$ and for every $t < T$, the function $W^t$ is surjective.
\end{lemma}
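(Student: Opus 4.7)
The plan is to exploit the key property built into the choice of $T$: for every $t < T$, the map $x \mapsto t \nabla H(\nabla u_0(x))$ has Lipschitz constant $t \cdot \mathrm{Lip}(\nabla H(\nabla u_0)) < 1$, so $W^t$ is the identity plus a strict contraction. Both conclusions of the lemma will follow from this elementary observation together with standard facts.

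For the surjectivity of $W^t$, I would fix $t < T$ and $y \in \mathbb{R}^d$ and apply the Banach fixed point theorem to the strict contraction $x \mapsto y + t \nabla H(\nabla u_0(x))$ of $\mathbb{R}^d$. Its unique fixed point satisfies $W^t(x) = y$, giving surjectivity. The same computation also produces a Lipschitz inverse for $W^t$: writing $W^t(x_i) = y_i$ for $i=1,2$ yields $x_1 - x_2 = (y_1 - y_2) + t\bigl(\nabla H(\nabla u_0(x_1)) - \nabla H(\nabla u_0(x_2))\bigr)$, whence $|x_1 - x_2| \le (1 - tL)^{-1} |y_1 - y_2|$ with $L := \mathrm{Lip}(\nabla H(\nabla u_0))$.

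For the pushforward statement, I would introduce the space-time map $\Phi : [0, T) \times \mathbb{R}^d \to [0, T) \times \mathbb{R}^d$ given by $\Phi(t, x) = (t, W(t, x))$, which by the first step is a bijection with inverse $\Psi(t, y) = (t, (W^t)^{-1}(y))$. The key task is to verify that $\Psi$ is locally Lipschitz on each slab $[0, T'] \times B_R$ with $T' < T$. For this I would first use the linear growth estimate $|\nabla H(\nabla u_0(x))| \le |\nabla H(\nabla u_0(0))| + L|x|$ to bound $|(W^t)^{-1}(y)|$ uniformly on such a slab, then repeat the contraction estimate above while allowing $t$ to vary, obtaining the Lipschitz bound on $\Psi$. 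Once $\Psi$ is locally Lipschitz it sends $\mathcal{L}^{d+1}$-null sets to $\mathcal{L}^{d+1}$-null sets (cover by countably many compacts and apply the classical fact for globally Lipschitz maps). Consequently, for every null set $A$ we obtain $\Phi_* \mathcal{L}^{d+1}(A) = \mathcal{L}^{d+1}(\Phi^{-1}(A)) = \mathcal{L}^{d+1}(\Psi(A)) = 0$, which is the desired absolute continuity.

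I do not anticipate a serious obstacle. The only mildly technical point is handling the unboundedness of $\nabla H(\nabla u_0)$ when bounding $\Psi$ on slabs unbounded in $y$, but this is settled by the linear growth estimate above. Everything else reduces to the Banach contraction principle and the elementary fact that Lipschitz maps preserve Lebesgue null sets.
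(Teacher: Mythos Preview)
Your proposal is correct and rests on the same key observation as the paper: for $t<T$ the map $W^t$ is the identity plus a strict contraction, which yields both surjectivity (via Banach's fixed point) and a Lipschitz inverse, from which absolute continuity follows.

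The only organizational difference is that the paper first applies Fubini to reduce the $\mathcal{L}^{d+1}$ statement to a per-slice $\mathcal{L}^d$ statement, and then on each fixed slice proves directly that $h^{-1}$ of a ball is contained in a ball of controlled radius (equivalently, that $(W^t)^{-1}$ is Lipschitz with constant $(1-tL)^{-1}$), concluding by the outer-measure covering argument. You instead build the space-time inverse $\Psi$ and show it is locally Lipschitz jointly in $(t,y)$, which forces you to track the $t$-dependence and invoke the linear-growth bound on $\nabla H(\nabla u_0)$ to control $|(W^t)^{-1}(y)|$ on slabs. Your route is slightly heavier but perfectly sound; the paper's Fubini slicing avoids the joint-Lipschitz computation at the cost of one extra line of measure theory. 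Neither approach offers a real advantage over the other here.
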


\begin{proof}

Define $W_0(t,x) = (t,W(t,x))$, let $B \subset [0,T) \times \mathbb{R}^d$ be a set of measure $0$. We have $(W_0)^{-1}(B) = \{ (t,x), (t,W(t,x)) \in B\}$, define $B^t = \{x, (t,x) \in B \} \subset \mathbb{R}^d$. By Fubini's theorem, we have 

\begin{align*}
    \mathcal{L}^{d+1}((W_0)^{-1}(B)) &= \int_0^T \int_{\mathbb{R}^d} 1_B(t,W(t,x))dxdt \\
    &= \int_0^T \mathcal{L}^d((W^t)^{-1}(B^t))dt. 
\end{align*}
On the other hand, we have $0 = \mathcal{L}^{d+1}(B) = \int_0^T \mathcal{L}^d(B^t)dt$ so $\mathcal{L}^d(B^t) = 0$ $t$-almost everywhere. In particular, if we show that the pushforward of $\mathcal{L}^d$ by $W^t$ is absolutely continuous with respect to $\mathcal{L}^d$ and that $W^t$ is surjective, then the lemma is proven.

Since $t < T $, the function $x \mapsto W^t(x) - x$ is a contraction. Hence, it suffices to show the following. For every Lipschitz function $g : \mathbb{R}^d \to \mathbb{R}^d$ with Lipschitz constant $L < 1$ with respect to the sup norm, the function $h : x \mapsto x +g(x)$ is surjective and the pushforward of $\mathcal{L}^d$ by $h$ is absolutely continuous with respect to $\mathcal{L}^d$.

Let $y \in \mathbb{R}^d$, the function $x \mapsto y - g(x)$ has a unique fixed point $x^*$, and by construction $h(x^*) = y$, so $h$ is surjective. Now, we wish to show that $h_* \mathcal{L}^d \ll \mathcal{L}^d$. Let $|\cdot|_\infty$ denote the $\ell^\infty$ norm on $\mathbb{R}^d$, denote by $B(x,r) = x + (-r,r)^d$ the ball of center $x$ and radius $r$ in $\mathbb{R}^d$ with respect to $|\cdot|_\infty$. First, note that for every $x,y \in \mathbb{R}^d$ we have $|h(x) - h(y)|_\infty\geq |x-y|_\infty - |g(x)-g(y)|_\infty \geq (1-L)|x-y|_\infty$ with $1-L > 0$. Since $|h(x)-h(y)|_\infty \geq (1-L)|x-y|_\infty$ we have $h^{-1}(B(h(x),r)) \subset B(x,\frac{1}{1-L})$. In particular, for every ball $B$ using the surjectivity of $h$ we have $\mathcal{L}^d(h^{-1}(B)) \leq \frac{1}{(1-L)^d} \mathcal{L}^d(B)$. Now let $A \subset \mathbb{R}^d$ be a set of measure $0$. By definition of the outer Lebesgue measure, for every $\delta > 0$ there exists a sequence of balls $(B_n)_n$ such that, $A \subset \bigcup B_n$ and $\sum \mathcal{L}^d(B_n) < \delta$. So we have

\begin{align*}
  \mathcal{L}^d(h^{-1}(A)) &\leq \sum_n \mathcal{L}^d(h^{-1}(B_n)) \\
  &\leq \sum_n \frac{1}{(1-L)^d} \mathcal{L}^d(B_n) \\
  &< \frac{\delta}{(1-L)^d}.
\end{align*}
This is true for all $\delta > 0$, so $\mathcal{L}^d(h^{-1}(A)) = 0$. Thus, $h_* \mathcal{L}^d \ll \mathcal{L}^d$.
\end{proof}

\begin{lemma} \label{l.unicity}
  For $\mathcal{L}^d$-almost every $x \in \mathbb{R}^d$, $W(\cdot,x)$ is the unique integral solution on $[0,T)$ of \eqref{e.ODEu} with initial condition $\phi(0) = x$.
\end{lemma}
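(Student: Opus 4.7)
The plan is to combine classical Lipschitz ODE uniqueness with the transport of $\nabla_x u$ along characteristics. Since $u$ is $\mathcal{C}^{1,1}$ on $[0,T_0)\times\mathbb{R}^d$ and $H\in\mathcal{C}^2$, the field $a = -\nabla H(\nabla_x u)$ is bounded and Lipschitz in $x$ uniformly in $t$, so classical Cauchy--Lipschitz theory gives a unique integral solution of \eqref{e.ODEu} for every initial datum $x\in\mathbb{R}^d$. It therefore suffices to show that the explicit curve $t\mapsto W(t,x)$ is an integral solution for $\mathcal{L}^d$-a.e.\ $x$. Since $\partial_t W(t,x)=-\nabla H(\nabla u_0(x))$ and $\nabla H$ is injective by strict convexity of $H$, this reduces to the characteristic identity
\begin{equation*}
  \nabla_x u(t, W(t,x)) = \nabla u_0(x) \quad \text{for } \mathcal{L}^d\text{-a.e.\ } x \in \mathbb{R}^d,\ \text{every } t \in [0,T).
\end{equation*}

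The key input is the transport equation for $v_i := \partial_{x_i} u$. Differentiating the HJ equation (which holds pointwise, $u$ being $\mathcal{C}^{1,1}$) formally in $x_i$ yields $\partial_t v_i - \nabla H(\nabla_x u)\cdot \nabla v_i = 0$ almost everywhere on $[0,T)\times\mathbb{R}^d$. Introduce the defect $g(t,x) := \nabla_x u(t,W(t,x)) - \nabla u_0(x)$, so $g(0,\cdot)\equiv 0$. By Rademacher, each $v_i$ is differentiable off an $\mathcal{L}^{d+1}$-null set $N_i$; Lemma~\ref{l.propw} ensures that the pullback of $N_i$ by $(t,x)\mapsto (t,W(t,x))$ remains $\mathcal{L}^{d+1}$-null, and the same applies to the null set where the transport equation fails. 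Invoking Fubini, I obtain that for $\mathcal{L}^d$-a.e.\ $x$ the transport equation holds at $(t,W(t,x))$ for $\mathcal{L}^1$-a.e.\ $t\in[0,T)$, and $v_i$ is differentiable there.

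At those good times the chain rule is legitimate and yields
\begin{equation*}
  \tfrac{d}{dt} v_i(t,W(t,x)) = \nabla v_i(t,W(t,x)) \cdot \big[\nabla H(\nabla_x u(t,W(t,x))) - \nabla H(\nabla u_0(x))\big].
\end{equation*}
Since $u$ is Lipschitz, $\nabla_x u$ is bounded and $\nabla H$ is Lipschitz on that bounded range, hence $|g'(t,x)| \leq C\,|g(t,x)|$ for a.e.\ $t$. The function $g(\cdot,x)$ is absolutely continuous with $g(0,x)=0$, so Grönwall's inequality forces $g(t,x)=0$ for every $t\in[0,T)$. This is the desired characteristic identity, from which $\dot W(t,x) = a(t,W(t,x))$ follows, and uniqueness in Cauchy--Lipschitz identifies $W(\cdot,x)$ as the unique integral solution.

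The main difficulty is the chain-rule step: the curve $t\mapsto(t,W(t,x))$ is one-dimensional while $v_i$ is only Lipschitz, so without extra information one cannot exclude that the curve lies entirely in the null set of non-differentiability of $v_i$. Lemma~\ref{l.propw}, combined with Fubini, is precisely what converts the $\mathcal{L}^{d+1}$-a.e.\ differentiability of $v_i$ into the ``a.e.\ $x$, a.e.\ $t$'' statement needed to apply Grönwall along almost every trajectory.
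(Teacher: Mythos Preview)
Your proof is correct and follows the paper's strategy closely: Cauchy--Lipschitz for uniqueness, Rademacher plus Lemma~\ref{l.propw} and Fubini to justify the chain rule along $t\mapsto(t,W(t,x))$ for almost every $x$, and the transport equation for $\partial_{x_i}u$ to control $t\mapsto\nabla_x u(t,W(t,x))$.

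The one substantive difference is how the argument is closed. The paper writes
\[
\tfrac{d}{dt}\nabla_x u(t,W(t,x))=\partial_t\nabla_x u+a(t,W(t,x))\cdot\nabla_x(\nabla_x u)=0,
\]
which tacitly substitutes $\dot W(t,x)=a(t,W(t,x))$ in the chain rule --- precisely the identity being proved. Your Gr\"onwall argument sidesteps this circularity: you correctly compute the derivative of $g$ as $\nabla_x^2 u\cdot[\nabla H(\nabla_x u(t,W(t,x)))-\nabla H(\nabla u_0(x))]$, bound it by $C|g|$ using that $\nabla_x^2 u\in L^\infty$ (from $u\in\mathcal{C}^{1,1}$) and that $\nabla H$ is Lipschitz on the bounded range of $\nabla_x u$, and conclude $g\equiv 0$ from $g(0,x)=0$. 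Your version is in fact more careful on this point. (A minor notational quibble: your displayed $\nabla v_i$ should be $\nabla_x v_i$ for the dot product to make sense.)
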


\begin{proof} 
  Since $a$ is Lipschitz on $[0,T) \times \mathbb{R}^d$, the Picard–Lindelöf theorem applies and \eqref{e.ODEu} admits a unique maximal solution. Let
  \begin{equation}
    D = \{ (t,x) \in [0,T) \times \mathbb{R}^d, \textrm{$\nabla u$ is not differentiable at $(t,x)$} \},
  \end{equation}
  by Rademacher's theorem the set $D$ is $\mathcal{L}^{d+1}$ negligible. According to Lemma \ref{l.propw}, this means that the set $\{ (t,x) \in [0,T) \times \mathbb{R}^d, \textrm{$\nabla u$ is not differentiable at $(t,W^t(x))$} \}$ is also $\mathcal{L}^{d+1}$ negligible. In particular, for $\mathcal{L}^d$-almost every $x \in \mathbb{R}^d$, the function $t \mapsto \nabla u(t,W(t,x))$ is differentiable $\mathcal{L}^1$-almost everywhere on $[0,T)$. Let $t \in [0,T)$ such that $\nabla u$ is differentiable at $(t,W(t,x))$, we have 
  \begin{equation}
     \frac{d}{dt} \nabla_x u(t,W(t,x)) = \nabla_x u_t(t,W(t,x)) + a(t,W(t,x)) \cdot \nabla_x (\nabla_x u(t,W(t,x))) = 0.
  \end{equation}
  Since $t \mapsto \nabla_x u(t,W(t,x))$ is a Lipschitz function, it is equal to the integral of its derivative, so it is constant on $[0,T)$.
  It follows that, for every $t \in [0,T)$,
  \begin{equation}
    W(t,x) = x + \int_0^t \nabla H(\nabla u_0(x))d\tau = x + \int_0^t \nabla H(\nabla_x u(\tau,W(\tau,x))) d\tau.
  \end{equation}
  Thus, $W(\cdot,x)$ is the unique integral solution of \eqref{e.ODEu} on $[0,T)$.
\end{proof}

Let us now study the behavior of $f$ and $u$ along the solutions of \eqref{e.ODEu}. By Rademacher's theorem, $f$ and $u$ are differentiable almost everywhere. According to Lemma \ref{l.propw}, the push forward of $\mathcal{L}^{d+1}$ by $(t,x) \mapsto (t,W(t,x))$ is absolutely continuous with respect to $\mathcal{L}^{d+1}$. Hence, for $\mathcal{L}^d$-almost every $x \in \mathbb{R}^d$ the function $t \mapsto (f-u)(t,W(t,x))$ is differentiable $\mathcal{L}^1$-almost everywhere. Using the convexity of $H$ the following holds $\mathcal{L}^{d+1}$-almost everywhere,

\begin{align*}
  \frac{d}{dt} (f-u)(t,W(t,x)) &= \partial_t(f-u)(t,W(t,x)) - \nabla H( \nabla_x u(t,W(t,x))) \cdot (\nabla_x f - \nabla_x u)(t,W(t,x))\\
  &= \left[ H(\nabla_x f) - H(\nabla_x u) + \nabla H (\nabla_x u) \cdot( \nabla_x u - \nabla_x f) \right](t,W(t,x)) \\
  &\geq 0.
\end{align*}
Since $f$, $u$ and $W$ are Lipschitz functions, the mapping $t \mapsto (f-u)(t,W(t,x))$ is equal to the integral of its derivative. So, we have $(f-u)(t,W^t(x)) \geq 0$ for every $t < T$ and $\mathcal{L}^d$-almost every $x \in \mathbb{R}^d$. By continuity of $f$, $u$ and $W$ this last bound is actually true for every $t < T$ and for every $x \in \mathbb{R}^d$. According to Lemma \ref{l.propw}, for every $t < T$ the function $W^t$ is surjective. In conclusion, we have proven the following lemma.

\begin{lemma} \label{l.firstineq}
Let $f$ and $H$ be functions satisfying the hypothesis of Theorem \ref{t.wuniqueness}, and let $u$ be the viscosity solution of \eqref{e.HJ}. Let $T_0 > 0$ be such that $u \in \mathcal{C}^{1,1}([0,T_0) \times \mathbb{R}^d)$, and define $T = \min(T_0, 1/\mathrm{Lip} (\nabla H(\nabla u_0))$. Then, for every $t < T$ and $x \in \mathbb{R}^d$, $f(t,x) \geq u(t,x)$.
\end{lemma}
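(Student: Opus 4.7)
The plan is to exploit the characteristic curves $W(t,x) = x - t\nabla H(\nabla u_0(x))$ constructed above, along which Lemma \ref{l.unicity} identifies $t \mapsto W(t,x)$ with the unique integral solution of $\dot\phi = -\nabla H(\nabla_x u(t,\phi))$ for $\mathcal{L}^d$-almost every $x$. The whole argument reduces to showing that $f-u$ is non-decreasing along these curves; since $(f-u)(0,\cdot) = 0$ and $W^t$ is surjective by Lemma \ref{l.propw}, this will yield $f \geq u$ on $[0,T) \times \mathbb{R}^d$.

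To obtain the monotonicity, I would differentiate $t \mapsto (f-u)(t,W(t,x))$ at points where $f$ is differentiable, using that $u$ is $\mathcal{C}^{1,1}$ on $[0,T_0) \times \mathbb{R}^d$. Combining the PDE $\partial_t f = H(\nabla_x f)$ with $\partial_t u = H(\nabla_x u)$ and the chain rule, the time derivative equals
\begin{equation*}
\bigl[H(\nabla_x f) - H(\nabla_x u) - \nabla H(\nabla_x u)\cdot(\nabla_x f - \nabla_x u)\bigr](t,W(t,x)),
\end{equation*}
which is non-negative by convexity of $H$.

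The main obstacle, as I see it, is the measure-theoretic bookkeeping: the PDE for $f$ holds only $\mathcal{L}^{d+1}$-almost everywhere, so the chain-rule identity above only makes sense at points $(t,W(t,x))$ at which $f$ is differentiable. To pass from ``almost everywhere in Eulerian coordinates'' to ``almost everywhere along characteristics'', I would invoke the absolute-continuity half of Lemma \ref{l.propw}: the pull-back by $(t,x)\mapsto(t,W(t,x))$ of the null set of bad Eulerian points is itself null, and then by Fubini, for $\mathcal{L}^d$-a.e.\ $x$ the derivative above exists for $\mathcal{L}^1$-a.e.\ $t \in [0,T)$. Since $f$, $u$ and $W$ are Lipschitz, $t \mapsto (f-u)(t,W(t,x))$ is absolutely continuous, hence equal to the integral of its non-negative derivative, so $(f-u)(t,W(t,x)) \geq 0$ for $\mathcal{L}^d$-a.e.\ $x$. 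A continuity argument extends the inequality to all $x$, and surjectivity of $W^t$ from Lemma \ref{l.propw} then delivers $f \geq u$ on $[0,T) \times \mathbb{R}^d$.
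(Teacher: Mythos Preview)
Your proposal is correct and follows essentially the same argument as the paper: differentiate $(f-u)$ along the characteristics $W(t,x)$, use convexity of $H$ to obtain non-negativity of the derivative, invoke the absolute-continuity part of Lemma~\ref{l.propw} to justify the chain rule $\mathcal{L}^1$-a.e.\ in $t$ for $\mathcal{L}^d$-a.e.\ $x$, integrate using Lipschitz continuity, and conclude via continuity and surjectivity of $W^t$. The measure-theoretic bookkeeping you flag as the main obstacle is exactly what the paper handles, and in the same way.
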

 
\section{Construction of the characteristics} \label{s.constructionofx}

As explained in Section \ref{s.proofsketch}, if we define $b = -\nabla H( \nabla_x f)$ and if $f$ is twice differentiable, we have for all $i$,
\begin{equation}
   \partial_t (\partial_{x_i} f) + b \cdot \nabla_x (\partial_{x_i} f) = 0.
\end{equation}
This means that, if they exist, the derivatives of $f$ each satisfy a transport equation with a vector field $b$ that depends on $f$. The aim of this section is to construct appropriate solutions of the associated characteristic equation,
\begin{equation} \label{e.chareqz}
  \dot{\phi}(t) = b(t,\phi(t)).
\end{equation}
Note that this equation can be studied even if $f$ is not twice differentiable, as there are only derivatives of order at most one that appear in the definition of the vector field $b$ and those always exist, at least almost everywhere, since $f$ is Lipschitz. We define $f_\varepsilon(t,x) = (f^t * \eta_\varepsilon)(x)$ and $b_\varepsilon = - \nabla H (\nabla_x f_\epsilon)$, so that $b_\varepsilon$ is a smooth approximation of $b$. Furthermore, if $\phi_\varepsilon$ is a maximal solution of
\begin{equation} \label{e.apprxoODE}
  \begin{cases}
\dot{\phi}_\varepsilon(t) = b_\varepsilon(t,\phi(t)), \\
\phi_\varepsilon(0) = x.
\end{cases}
\end{equation} 
Then, $|\phi_\varepsilon(t)| \leq x + t\|b_\varepsilon\|_{L^\infty([O,T^*] \times \mathbb{R}^d)}$, so the maximal solution of \eqref{e.apprxoODE} does not blow up in finite time, hence it is defined on $[0,T^*)$. Define $X_\varepsilon(t,x)$ as the value at $t \in [0,T^*)$ of the solution of \eqref{e.apprxoODE} started at $x \in \mathbb{R}^d$. We will show that the sequence $(X_\varepsilon)_\varepsilon$ converges, in some sense, to a probability measure on the set of solution of \eqref{e.chareqz} as $\varepsilon \to 0$. To do so, we will use the theory of Young measures. We start by showing that the $X^t_\varepsilon$'s do not expand too much the size of measurable sets. Recall that given $R \in (0,\infty]$, $B_R$ denotes the ball of center $0$ and radius $R$ in $\mathbb{R}^d$ with respect to the sup norm and with the understanding that $B_\infty = \mathbb{R}^d$.

\begin{lemma} \label{l.anticoncentration}

There exists a constant $c > 0$ depending on $H$ and $f$ such that, for every $R \in (0,\infty]$, $\varepsilon>0$, $t \in [0,T^*)$ and non-negative continuous function $\beta : \mathbb{R}^d \to \mathbb{R}$,
\begin{equation} \label{e.anticoncentration}
    \int_{B_R}\beta (X_\varepsilon^t(x))dx \leq e^{ct} \int_{B_{R + t|b|_\infty}} \beta(x) dx.
\end{equation}
Furthermore, for every measurable set $A \subset \mathbb{R}^d$, we have $\mathcal{L}^d((X_\varepsilon^t)^{-1}(A)) \leq e^{ct} \mathcal{L}^d(A)$ and $(X_\varepsilon)_*\mathcal{L}^d \ll \mathcal{L}^d$.
\end{lemma}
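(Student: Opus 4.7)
The plan is to control the Jacobian of the $\mathcal{C}^1$ diffeomorphism $X_\varepsilon^t$ uniformly in $\varepsilon$, by combining the semi-concavity of $f$ with the convexity of $H$. Writing $J_\varepsilon(t,x) = \det D_x X_\varepsilon^t(x)$, Liouville's formula for the smooth ODE \eqref{e.apprxoODE} gives
\[
    J_\varepsilon(t,x) = \exp\!\left( \int_0^t (\mathrm{div}\, b_\varepsilon)(s, X_\varepsilon^s(x))\, ds \right),
\]
so the whole estimate reduces to a uniform lower bound on $\mathrm{div}\, b_\varepsilon$.

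Since $b_\varepsilon = -\nabla H(\nabla_x f_\varepsilon)$, a direct computation yields
\[
    \mathrm{div}\, b_\varepsilon = -\mathrm{tr}\!\bigl(D^2 H(\nabla_x f_\varepsilon)\cdot D_x^2 f_\varepsilon\bigr).
\]
Convolution in space preserves the semi-concavity hypothesis on $f$ (since the convolution of a convex function with a non-negative mollifier is still convex), so $D_x^2 f_\varepsilon \leq 2c\, I$ pointwise; and $D^2 H \geq 0$ by convexity of $H$. The elementary inequality $\mathrm{tr}(AB) \leq \lambda\, \mathrm{tr}(A)$ for symmetric matrices with $A \geq 0$ and $B \leq \lambda I$ then gives $\mathrm{div}\, b_\varepsilon \geq -2c\, \mathrm{tr}\!\bigl(D^2 H(\nabla_x f_\varepsilon)\bigr)$. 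Because $|\nabla_x f_\varepsilon| \leq \mathrm{Lip}(f)$ and $H \in \mathcal{C}^2$, this trace is bounded on the compact range, yielding $\mathrm{div}\, b_\varepsilon \geq -c_0$ uniformly in $\varepsilon$, hence $J_\varepsilon(t,x) \geq e^{-c_0 t}$ everywhere.

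The change of variables $y = X_\varepsilon^t(x)$ is now legitimate because $X_\varepsilon^t$ is a $\mathcal{C}^1$ diffeomorphism, and produces
\[
    \int_{B_R} \beta(X_\varepsilon^t(x))\, dx = \int_{X_\varepsilon^t(B_R)} \frac{\beta(y)}{J_\varepsilon(t,(X_\varepsilon^t)^{-1}(y))}\, dy \leq e^{c_0 t}\! \int_{X_\varepsilon^t(B_R)}\! \beta(y)\, dy.
\]
Since $\|b_\varepsilon\|_\infty \leq |b|_\infty$ (mollifying cannot increase the sup norm of $\nabla_x f$), one has $|X_\varepsilon^t(x) - x| \leq t |b|_\infty$, so $X_\varepsilon^t(B_R) \subset B_{R + t|b|_\infty}$; this is \eqref{e.anticoncentration} with $c = c_0$. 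Taking $\beta = \mathbf{1}_A$ and $R = \infty$, and invoking the standard area formula for smooth diffeomorphisms to justify the identity on indicator functions, yields $\mathcal{L}^d((X_\varepsilon^t)^{-1}(A)) \leq e^{c_0 t} \mathcal{L}^d(A)$, which in particular implies the absolute continuity statement.

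The only non-routine step is the divergence lower bound: converting the semi-concavity hypothesis into a uniform pointwise bound on $\mathrm{div}\, b_\varepsilon$ through the trace inequality combined with the positivity of $D^2 H$. Every other ingredient is standard smooth-flow machinery, and it is precisely the interplay between convexity of $H$ and semi-concavity of $f$ that yields the sub-exponential expansion of measure under the flow.
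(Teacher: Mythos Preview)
Your proof is correct and follows essentially the same route as the paper: Liouville's formula for the Jacobian of the smooth flow, a uniform lower bound on $\mathrm{div}\,b_\varepsilon$ obtained by combining $D^2H\ge 0$ with $D_x^2 f_\varepsilon \le 2c\,I$, and then the change of variables together with the containment $X_\varepsilon^t(B_R)\subset B_{R+t|b|_\infty}$. The only cosmetic differences are that the paper phrases the trace inequality via the square root $M=(D^2H)^{1/2}$ and writes $\mathrm{Tr}(M(2c\,I - D_x^2 f_\varepsilon)M^*)\ge 0$, and that it passes to indicator functions by approximating $1_A$ with continuous $\beta$'s rather than by invoking the area formula directly; both pairs of arguments are equivalent.
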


\begin{proof}

A change of variable yields
\begin{equation}
    \int_{B_R} \beta(X_\varepsilon^t(x))dx  = \int_{X^t_\varepsilon(B_R)} \beta(x) \det(\nabla_x X^t_\varepsilon((X_\varepsilon^t)^{-1}(x)))^{-1}dx.
\end{equation}
We have $X^t_\varepsilon(B_R) \subset B_{R + t|b|_\infty}$, so it is enough to show that $\det(\nabla_x X^t_\varepsilon(x))^{-1} \leq e^{ct}$. It follows from the definition of $X_\varepsilon$ that
\begin{equation}
    \frac{d}{dt} \nabla_x X_\varepsilon^t(x) = \nabla_x b_\varepsilon^t(X_\varepsilon^t(x)) \nabla_x X_\varepsilon^t(x).
\end{equation}
If $A(t)$ is a matrix that is a differentiable function of a parameter $t$, then $\mathrm{det}(A(t))$ is a differentiable function of $t$ and
\begin{equation}
    \frac{d}{dt} \mathrm{det}(A(t))~=~\mathrm{Tr}(\mathrm{adj}(A(t)) \frac{d}{dt} A(t)),
\end{equation}
where $\mathrm{adj}(A(t))$ is the adjugate of $A(t)$ (transpose of the cofactor matrix), in particular $A(t) \mathrm{adj}(A(t)) = \mathrm{det}(A(t)) I_d$. Applying this to the matrix $A(t) = \nabla_x X_\varepsilon^t(x)$ yields 

\begin{align*}
    \frac{d}{dt} \det (\nabla_x X_\varepsilon^t(x)) &= \mathrm{Tr}(\mathrm{adj}(\nabla_x X_\varepsilon^t(x)) \nabla_x b_\varepsilon^t(X_\varepsilon^t(x)) \nabla_x X_\varepsilon^t(x)) \\
    &= \mathrm{Tr}(\nabla_x X_\varepsilon^t(x)  \mathrm{adj}(\nabla_x X_\varepsilon^t(x)) \nabla_x b_\varepsilon^t(X_\varepsilon^t(x)) ) \\
    &= \det(\nabla_x X_\varepsilon^t(x))    \mathrm{div} \,b_\varepsilon^t(X_\varepsilon^t(x)).
\end{align*}
Hence, $ \det (\nabla_x X_\varepsilon^t(x)) = \exp \left( \int_0^t \mathrm{div} \, b_\varepsilon^s(X_\varepsilon^s(x)) ds\right)$. Furthermore, the divergence of $b_\varepsilon^t$ is bounded from below by a constant. Indeed, 
\begin{equation*}
    \nabla_x b_\varepsilon^t = - \nabla^2 H(\nabla_x f_\varepsilon^t) \nabla_x^2 f_\varepsilon^t,
\end{equation*}
and according to the hypothesis of Theorem \ref{t.wuniqueness}, there exists a positive constant $c_0$ such that $x \mapsto c_0|x|^2-f(t,x)$ is convex for all $t \geq 0$. So, for all $\varepsilon > 0 $, $\nabla_x^2 f_\varepsilon \leq c_0$. The function $H$ is convex, so $\nabla^2 H(\nabla_x f_\varepsilon^t(x))$ is a positive semi-definite matrix for all $x$. Let $M(x)$ denote the square root of this matrix, we have 

\begin{align*}
    \mathrm{div} \,  b_\varepsilon^t(x) &= \mathrm{Tr} (\nabla_x b_\varepsilon^t(x)) \\
    &= \mathrm{Tr}( - \nabla^2 H(\nabla_x f_\varepsilon^t(x)) \nabla_x^2 f_\varepsilon^t(x)) \\
    &= \mathrm{Tr}(M(x)(c_0-\nabla^2_xf_\varepsilon^t(x))M(x)^*) - c_0 \mathrm{Tr}(\nabla^2H(\nabla_x f_\varepsilon^t(x))) \\
    &\geq 0 - c_0 \Delta H(\nabla_x f^t_\varepsilon(x)) \\
    &\geq -c,
\end{align*}
where $c = c_0 \|\Delta H\|_{L^\infty(B(0,\mathrm{Lip}(f)))}$. In conclusion,
\begin{equation*}
    \det (\nabla_x X_\varepsilon^t(x))^{-1} = \exp \left( -\int_0^t \mathrm{div} \, b_\varepsilon^s(X_\varepsilon^s(x)) ds\right) \leq e^{ct}.
\end{equation*}
Thus, the desired bound is proven. The second inequality, $ \mathcal{L}^d((X_\varepsilon^t)^{-1}(A)) \leq e^{ct} \mathcal{L}^d(A)$ is obtained by taking continuous positive approximations of $1_A$ in \eqref{e.anticoncentration} and using dominated convergence. In particular, if $\mathcal{L}^d(A) = 0$ then $\mathcal{L}^d((X_\varepsilon^t)^{-1}(A)) = 0$, so $(X_\varepsilon)_*\mathcal{L}^d \ll \mathcal{L}^d$.
\end{proof}

We will now tackle the problem of showing that the sequence $(X_\varepsilon)$ converges up to extraction. Let $E$ be a topological space, we denote by $\mathcal{P}(E)$ the set of Borel probability measures on $E$. We recall the following basic result on the theory of Young measures \cite[Theorem 6.5]{bvequation}.

\begin{theorem} \label{t.youngmeasure}
    Let $K$ be a compact metric space and $A \subset \mathbb{R}^d$ be a measurable set. Let $(\eta_h)_h$ be a sequence of measurable maps $A \to \mathcal{P}(K)$. There exists an extraction $(\varepsilon_k)_k$ and a measurable map $\eta : A \to \mathcal{P}(K)$ such that the following holds. For every bounded function $\phi = \phi(x,u) : A \times K \to \mathbb{R}$ that is continuous with respect to $u$ and measurable with respect to $x$, we have 
    \begin{equation}
        \lim_{k \to \infty} \int_A \int_K \phi(x,u) d\eta_{\varepsilon_k,x}(u) dx = \int_A \int_K \phi(x,u) d\eta_{x}(u)dx.
    \end{equation}
\end{theorem}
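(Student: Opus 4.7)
The plan is to realize each $\eta_h$ as a linear functional on $L^1(A; C(K))$, use sequential Banach-Alaoglu in this separable predual to extract a weak-$*$ limit $L$, identify $L$ with the required Young measure $\eta$ via the standard duality representation, and finally extend convergence from jointly continuous test functions to those only measurable in $x$ via a Lusin-Tietze approximation.

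First I would reduce to the case $\mathcal{L}^d(A) < \infty$ by a diagonal extraction along an exhaustion of $A$ by bounded subsets. For $\psi \in L^1(A; C(K))$, define
\begin{equation*}
    L_h(\psi) = \int_A \int_K \psi(x)(u) \, d\eta_{h,x}(u) \, dx,
\end{equation*}
so that $|L_h(\psi)| \leq \|\psi\|_{L^1(A;C(K))}$ since each $\eta_{h,x}$ is a probability measure. Because $K$ is compact metric, $C(K)$ is separable and hence so is $L^1(A; C(K))$. Sequential Banach-Alaoglu then yields an extraction $(\varepsilon_k)$ and a limit $L$ in the unit ball of the dual space.

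Next I would invoke the duality $(L^1(A; C(K)))^* \simeq L^\infty_{w^*}(A; \mathcal{M}(K))$ to represent $L$ as a weak-$*$ measurable family $\{\eta_x\}_{x \in A}$ of signed Borel measures on $K$. Testing $L$ against $\psi(x) = \mathbf{1}_B(x) \cdot g(u)$ for $B \subset A$ measurable and $g \in C(K)$ non-negative shows $\eta_x \geq 0$ for a.e. $x$; taking $g \equiv 1$ shows $\eta_x(K) = 1$ a.e. Hence $\eta_x \in \mathcal{P}(K)$, and weak-$*$ measurability of $x \mapsto \eta_x$ into the Polish space $\mathcal{P}(K)$ equipped with the narrow topology supplies the required measurability.

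The main obstacle is the final step: extending convergence to test functions $\phi$ that are continuous in $u$ but only measurable in $x$, since Banach-Alaoglu only gives convergence against elements of $L^1(A; C(K))$. Given such $\phi$ bounded by $M$, I would apply Lusin's theorem to the map $x \mapsto \phi(x, \cdot) \in C(K)$ (a separable Banach space) to obtain for every $\delta > 0$ a compact set $F_\delta \subset A$ with $\mathcal{L}^d(A \setminus F_\delta) < \delta$ on which this map is continuous. Tietze-extending to $\tilde\phi \in C_b(A \times K)$ with $\|\tilde\phi\|_\infty \leq M$, weak-$*$ convergence yields the limit against $\tilde\phi$, while the contributions of both sides on $A \setminus F_\delta$ are bounded by $2M\delta$. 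Letting $\delta \to 0$ closes the argument.
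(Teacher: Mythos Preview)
The paper does not prove this theorem; it is quoted verbatim as a known result with the citation \cite[Theorem 6.5]{bvequation} and then used as a black box in Section~\ref{s.constructionofx}. So there is no ``paper's own proof'' to compare against.

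That said, your sketch is the standard functional-analytic proof of the fundamental theorem of Young measures and is essentially correct. A couple of minor remarks. First, in the Tietze step you are extending a continuous map $F_\delta \to C(K)$; this works because $F_\delta \times K$ is closed in $\mathbb{R}^d \times K$ and $\phi$ is jointly continuous there, so ordinary real-valued Tietze applies---but it is even simpler to take $\tilde\phi = \mathbf{1}_{F_\delta}\phi$, which already lies in $L^1(A;C(K))$ without any extension. Second, the statement as written allows $\mathcal{L}^d(A)=\infty$ while only requiring $\phi$ bounded, in which case neither side of the claimed limit need be finite; your diagonal reduction produces a subsequence that works on every bounded piece, which is exactly how the paper uses the result (all applications integrate over balls $B_R$), but the global statement for merely bounded $\phi$ on infinite-measure $A$ is not literally well-posed. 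This is a defect of the formulation rather than of your argument.
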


Fix $T \in (0,T^*)$, and define $\Gamma = \mathcal{C}^0([0,T],\mathbb{R}^d)$. By the Arzelà-Ascoli theorem, the closure $\mathcal{K}$ of $\{ X_\varepsilon(\cdot,x), x \in B(0,R), \varepsilon > 0\}$ in $\Gamma$ is a compact set for the topology of uniform convergence. For every $\varepsilon >0$, consider
\begin{equation}
\eta_\varepsilon : \begin{cases}
  \mathbb{R}^d \longrightarrow \mathcal{P}(\mathcal{K}) \\
   x \longmapsto \delta_{X_\varepsilon(\cdot,x)}.
\end{cases}
\end{equation}
By Theorem \ref{t.youngmeasure}, up to extraction, the sequence $(\eta_\varepsilon)_\varepsilon$ converges to some measurable map $\eta : \mathbb{R}^d \to \mathcal{P}(\mathcal{K})$. More precisely, there exists a sequence $(\varepsilon_k)_k$ such that for every bounded function $\phi = \phi(x,\gamma) : \mathbb{R}^d \times \Gamma \to \mathbb{R}$ that is continuous in $\gamma$ and measurable in $x$, we have 
\begin{equation}
  \lim_{k \to \infty} \int_{\mathbb{R}^d} \phi(x,X_{\varepsilon_k}(\cdot,x))dx = \int_{\mathbb{R}^d} \int_\Gamma \phi(x,\gamma) d\eta_x(\gamma)dx.
\end{equation}
The remainder of this section is devoted to showing that for almost all $x \in \mathbb{R}^d$, the probability measure $\eta_x$ is well-behaved and concentrated on the set of integral solutions of \eqref{e.chareqz} with initial condition $x$.
\begin{lemma} \label{l.limitanticoncentration}
  Let $c > 0$ be the constant of Lemma \ref{l.anticoncentration}. For every $t \in [0,T]$, $R \in (0,\infty]$ and non-negative continuous bounded map $\beta : \mathbb{R}^d \to \mathbb{R}$, we have
  \begin{equation} \label{e.limitbound}
    \int_{B_R} \int_\Gamma \beta(\gamma(t)) d\eta_x(\gamma) dx \leq e^{ct} \int_{B_{M_R}} \beta(x)dx,
  \end{equation}
  where $M_R = R + T|b|_\infty$. Furthermore, if $A \subset \mathbb{R}^d$ is a measurable set, then 
  \begin{equation} \label{e.limitneg}
     \int_{\mathbb{R}^d} \int_\Gamma 1_A(\gamma(t)) d\eta_x(\gamma)dx \leq e^{ct} \mathcal{L}^d(A).
  \end{equation}
\end{lemma}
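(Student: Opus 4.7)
The plan is to push the anti-concentration bound of Lemma \ref{l.anticoncentration} through the Young-measure convergence. For the pre-limit objects $\eta_{\varepsilon,x} = \delta_{X_\varepsilon(\cdot,x)}$ the identity
\[
\int_{B_R} \int_\Gamma \beta(\gamma(t)) \, d\eta_{\varepsilon,x}(\gamma) \, dx = \int_{B_R} \beta(X_\varepsilon^t(x)) \, dx
\]
is immediate, and Lemma \ref{l.anticoncentration} bounds the right-hand side by $e^{ct} \int_{B_{R + t|b|_\infty}} \beta \leq e^{ct} \int_{B_{M_R}} \beta$, using $\beta \geq 0$ and $t \leq T$. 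Everything thus boils down to passing $\varepsilon = \varepsilon_k \to 0$ along the convergent subsequence.

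For \eqref{e.limitbound}, I would first treat the case $R < \infty$ by taking the test function $\phi(x,\gamma) = 1_{B_R}(x) \beta(\gamma(t))$, which is bounded (since $\beta$ is), measurable in $x$, and continuous in $\gamma$ for the uniform topology on $\Gamma$ because evaluation $\gamma \mapsto \gamma(t)$ is Lipschitz. The convergence displayed right after Theorem \ref{t.youngmeasure} then yields
\[
\int_{B_R} \int_\Gamma \beta(\gamma(t)) \, d\eta_x(\gamma) \, dx = \lim_{k \to \infty} \int_{B_R} \beta(X_{\varepsilon_k}^t(x)) \, dx \leq e^{ct} \int_{B_{M_R}} \beta(x) \, dx.
\]
For $R = +\infty$ I would then let $R \to \infty$ and use monotone convergence on both sides, legitimate because $\beta \geq 0$ and $M_R \to \infty$.

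To deduce \eqref{e.limitneg} I would approximate $1_A$ from above by bounded non-negative continuous functions. The case $\mathcal{L}^d(A) = \infty$ being trivial, assume $\mathcal{L}^d(A) < \infty$; by outer regularity of $\mathcal{L}^d$, for any $\delta > 0$ there is an open set $U \supset A$ with $\mathcal{L}^d(U) \leq \mathcal{L}^d(A) + \delta$, and $\beta_n(x) = \min(1, n \, d(x, \mathbb{R}^d \setminus U))$ is a sequence of bounded non-negative continuous functions satisfying $\beta_n \uparrow 1_U$. Applying \eqref{e.limitbound} with $R = +\infty$ to each $\beta_n$ and passing to the limit by monotone convergence on both sides gives
\[
\int_{\mathbb{R}^d} \int_\Gamma 1_U(\gamma(t)) \, d\eta_x(\gamma) \, dx \leq e^{ct} \mathcal{L}^d(U) \leq e^{ct} (\mathcal{L}^d(A) + \delta),
\]
and since $1_A \leq 1_U$, letting $\delta \to 0$ concludes. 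The only subtlety worth checking is that the monotone-convergence passages are legitimate, which reduces to measurability of $x \mapsto \int_\Gamma \beta(\gamma(t)) \, d\eta_x(\gamma)$; this is built into the measurability of the map $\eta$ guaranteed by Theorem \ref{t.youngmeasure}.
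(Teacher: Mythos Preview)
Your argument is correct, and for \eqref{e.limitbound} it coincides with the paper's: both pass the bound of Lemma~\ref{l.anticoncentration} through the Young-measure convergence with the test function $\beta(\gamma(t))$.

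For \eqref{e.limitneg}, however, your route differs and is in fact the more careful one. The paper simply writes the pre-limit bound
\[
\int_{\mathbb{R}^d} \int_\Gamma 1_A(\gamma(t))\, d\eta_{\varepsilon,x}(\gamma)\, dx = \int_{\mathbb{R}^d} 1_A(X_\varepsilon^t(x))\, dx \leq e^{ct}\mathcal{L}^d(A)
\]
and then ``lets $\varepsilon\to 0$ along $(\varepsilon_k)_k$'' to conclude. Taken literally this appeals to Theorem~\ref{t.youngmeasure} with the test function $\phi(x,\gamma)=1_A(\gamma(t))$, which is \emph{not} continuous in $\gamma$ for general measurable $A$, so the passage is not directly justified by the theorem as stated. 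Your approximation of $1_A$ from above by continuous functions via outer regularity, combined with monotone convergence, fills exactly this gap. What the paper's shortcut buys is brevity; what your argument buys is that every limit passage is covered by a stated convergence result.
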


\begin{proof}
  Consider, $\phi(x,\gamma) = \beta(\gamma(t))$, then using Lemma \ref{l.anticoncentration} we have,
  \begin{equation}
    \int_{B_R} \phi(x,X_\varepsilon(\cdot,x))dx = \int_{B_R} \beta(X_\varepsilon(t,x))dx \leq e^{ct} \int_{B_{R+t|b|_\infty}} \beta(x) dx.
  \end{equation}
  Letting $\varepsilon \to 0$ along the subsequence $(\varepsilon_k)_k$ we obtain \eqref{e.limitbound}. Let $A \subset \mathbb{R}^d$ be a measurable set, still according to Lemma \ref{l.anticoncentration}, we have 
  \begin{equation}
    \int_{\mathbb{R}^d} \int_\Gamma 1_A(\gamma(t)) d\eta_{\varepsilon,x} dx = \int_{\mathbb{R}^d} 1_A(X_\varepsilon^t(x)) dx \leq e^{ct}\mathcal{L}^d(A).
  \end{equation}
  Again, letting $\varepsilon \to 0$ along the subsequence $(\varepsilon_k)_k$ we obtain \eqref{e.limitneg}.
\end{proof}

For $x \in \mathbb{R}^d$, let $\Gamma_x^b$ denote the set of integral solutions of \eqref{e.chareqz} with initial condition $x$, that is
\begin{equation}
  \Gamma_x^b = \{ \gamma \in \Gamma, \forall t \in [0,T], \gamma(t)= x + \int_0^t b(\tau,\gamma(\tau))d\tau \}.
\end{equation}
\begin{lemma}
  For $\mathcal{L}^d$-almost every $x \in \mathbb{R}^d$, we have $\eta_x(\Gamma - \Gamma_x^b) = 0$.
\end{lemma}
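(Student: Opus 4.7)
The natural functional to consider is
\begin{equation*}
\Psi(x,\gamma) \;=\; \int_0^T \left| \gamma(t) - x - \int_0^t b(\tau,\gamma(\tau))\,d\tau \right| dt,
\end{equation*}
which is non-negative and vanishes exactly when $\gamma \in \Gamma_x^b$ (the integrand is continuous in $t$ because $b$ is bounded and $\gamma$ is continuous, so vanishing a.e.\ forces vanishing everywhere, and the value at $t=0$ pins $\gamma(0)=x$). The plan is to show that $\int_{B_R} \int_\Gamma \Psi(x,\gamma)\,d\eta_x(\gamma)\,dx = 0$ for every $R > 0$, which gives the claim by exhausting $\mathbb{R}^d$ with a countable sequence of balls.

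The prelimit bound is the easy direction: since $X_\varepsilon$ solves \eqref{e.apprxoODE}, the inner defect is $\int_0^t (b_\varepsilon-b)(\tau,X_\varepsilon(\tau,x))\,d\tau$, hence by Fubini and Lemma \ref{l.anticoncentration},
\begin{equation*}
\int_{B_R}\Psi(x,X_\varepsilon(\cdot,x))\,dx \;\leq\; T\, e^{cT} \int_0^T \int_{B_{M_R}} |b_\varepsilon - b|(\tau,y)\,dy\,d\tau,
\end{equation*}
which tends to $0$ as $\varepsilon \to 0$: indeed $\nabla_x f_\varepsilon \to \nabla_x f$ in $L^1_{\mathrm{loc}}$ and $\nabla H$ is Lipschitz on the bounded range of $\nabla_x f$, so $b_\varepsilon \to b$ in $L^1_{\mathrm{loc}}$.

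The main obstacle is that the Young measure convergence of $\eta_{\varepsilon_k}$ to $\eta$ only tests against integrands continuous in $\gamma$, whereas $\gamma \mapsto \Psi(x,\gamma)$ is not continuous because $b$ is merely measurable. To circumvent this, introduce a second approximation: let $b_\delta$ be a bounded continuous approximation of $b$ (e.g.\ a mollification) converging to $b$ in $L^1([0,T]\times B_{M_R})$, and define $\Psi_\delta$ with $b_\delta$ in place of $b$. On the compact set $\mathcal{K}$ (to which the $\eta_x$ are supported by construction), $\gamma \mapsto \Psi_\delta(x,\gamma)$ is continuous and bounded, so Theorem \ref{t.youngmeasure} applies and the same computation as above yields
\begin{equation*}
\int_{B_R} \int_\Gamma \Psi_\delta(x,\gamma)\,d\eta_x(\gamma)\,dx \;\leq\; T\, e^{cT} \int_0^T \int_{B_{M_R}} |b - b_\delta|(\tau,y)\,dy\,d\tau.
\end{equation*}

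To pass from $\Psi_\delta$ back to $\Psi$, apply the triangle inequality $\Psi \leq \Psi_\delta + T \int_0^T |b-b_\delta|(\tau,\gamma(\tau))\,d\tau$ and integrate. The new error term demands a version of Lemma \ref{l.limitanticoncentration} for bounded measurable integrands; this is obtained by writing $|b-b_\delta|(\tau,\cdot)\mathbf{1}_{B_{M_R}}$ via the layer-cake formula and applying \eqref{e.limitneg} to each super-level set (the truncation to $B_{M_R}$ is justified because every $\gamma \in \mathcal{K}$ satisfies $\gamma(\tau)\in\overline{B_{M_R}}$ thanks to the uniform Lipschitz bound on $X_\varepsilon$). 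This gives
\begin{equation*}
\int_{B_R} \int_\Gamma \Psi(x,\gamma)\,d\eta_x(\gamma)\,dx \;\leq\; 2T\, e^{cT} \int_0^T \int_{B_{M_R}} |b - b_\delta|(\tau,y)\,dy\,d\tau,
\end{equation*}
and letting $\delta \to 0$ concludes the proof.
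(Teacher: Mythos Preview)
Your proof is correct and follows the same overall strategy as the paper: approximate $b$ by a continuous field so that the Young-measure convergence can be invoked, then remove the approximation using the anticoncentration estimates of Lemmas~\ref{l.anticoncentration} and~\ref{l.limitanticoncentration}. The packaging differs in a useful way, however. The paper works at a fixed time $t$, passes to the limit via Fatou, and then has to argue separately that $b_{\varepsilon_k}(\tau,\gamma(\tau))\to b(\tau,\gamma(\tau))$ for $\eta_x$-a.e.\ $\gamma$ by controlling the negligible set $N$ where pointwise convergence fails; only afterwards does it upgrade from rational $t$ to all $t$. Your version integrates in $t$ from the outset and handles the discontinuity of $b$ by a second approximation $b_\delta$ together with the layer-cake use of \eqref{e.limitneg}, which is cleaner and sidesteps the pointwise-convergence bookkeeping entirely. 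The price is the mild measurability caveat you implicitly rely on when integrating $\Psi$ against $d\eta_x\,dx$; this is harmless, since one can instead extract a subsequence along which both $\Psi_{\delta_n}$ and the error $T\int_0^T|b-b_{\delta_n}|(\tau,\gamma(\tau))\,d\tau$ tend to zero $(dx\otimes d\eta_x)$-a.e., forcing $\Psi=0$ a.e.\ without ever integrating $\Psi$ directly.
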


\begin{proof}
  Fix $t \in [0,T]$ and let $c : \mathbb{R}_+ \times \mathbb{R}^d \to \mathbb{R}^d$ be a bounded smooth vector field. Define
  \begin{equation}
    \phi^c(x,\gamma) = \left| \gamma(t) - x - \int_0^t c(\tau, \gamma(\tau))d\tau \right|.
  \end{equation}
  Let $\varepsilon > 0$ and $R > 0$, using Fubini's theorem and Lemma \ref{l.anticoncentration}, we have 
  \begin{align*}
    \int_{B_R} \phi^c(x,X_\varepsilon(\cdot,x))dx &\leq \int_{B_R} |X_\varepsilon(t,x)-x - \int_0^t c(\tau,X_\varepsilon(\tau,x))d\tau |dx \\
    &\leq \int_{B_R} \int_0^t |b_\varepsilon-c|(\tau,X_\varepsilon(\tau,x))d\tau dx \\
    &\leq \int_0^t \int_{B_R} |b_\varepsilon-c|(\tau,X_\varepsilon(\tau,x))dx d\tau \\
    &\leq \int_0^t e^{c\tau} \int_{B_{M_R}} |b_\varepsilon-c|(\tau,x)dx d\tau \\
      &\leq e^{ct} \int_0^t |b^\tau_\varepsilon-c^\tau|_{L^1(B_{M_R})} d\tau.
    \end{align*}
    By dominated convergence, letting $\varepsilon \to 0$ along the subsequence $(\varepsilon_k)_k$ and using the definition of the map $\eta$, we obtain 
    \begin{equation}
        \int_{B_R} \int_\Gamma \phi^c(x,\gamma) d\eta_x(\gamma) dx \leq e^{ct} \int_0^t |b^\tau-c^\tau|_{L^1(B_{M_R})} d\tau.
    \end{equation}
    Taking $c = b_\varepsilon$ leads to 
    \begin{equation}
        \lim_{\varepsilon \to 0} \int_{B_R} \int_\Gamma \phi^{b_\varepsilon}(x,\gamma) d\eta_x(\gamma)dx = 0.
    \end{equation}
    By Fatou's lemma, 
    \begin{equation}
        \int_{B_R} \liminf_{\varepsilon \to 0} \int_\Gamma \phi^{b_\varepsilon}(x,\gamma) d\eta_x(\gamma) dx = 0.
    \end{equation}
    So, for $\mathcal{L}^d$-almost every $x \in B_R$,
    \begin{equation} \label{e.liminfk}
        \liminf_{\varepsilon \to 0} \int_\Gamma \phi^{b_\varepsilon} (x,\gamma) d\eta_x(\gamma) = 0.
    \end{equation}
    Letting $R \to \infty$ along a countable set, we obtain that \eqref{e.liminfk} is true for $\mathcal{L}^d$-almost every $x \in \mathbb{R}^d$.
    
    Let $N \subset (0,T) \times \mathbb{R}^d$ be the complement of $\overline{N}  = \{ (t,x), \lim_{k \to \infty} b_{\varepsilon_k}(t,x) = b(t,x) \}$. For every $t \in (0,T)$, define $N_t = \{x \in \mathbb{R}^d, (t,x) \in N \}$. The set $N$ is $\mathcal{L}^{d+1}$-negligible, so for $\mathcal{L}^1$-almost every $t \in (0,T)$, $\mathcal{L}^d(N_t) = 0$. For every $t \in (0,T)$ such that, $\mathcal{L}^d(N_t) = 0$, according to Lemma \ref{l.limitanticoncentration} we have $ \int_{\mathbb{R}^d} \int_\Gamma 1_{N_t}(\gamma(t)) d\eta_x(\gamma)dx = 0$. Therefore, 
  \begin{equation}
    \int_0^T \int_{\mathbb{R}^d} \int_\Gamma 1_{N_t}(\gamma(t))d\eta_x(\gamma) dx dt= 0.
  \end{equation}
  By Fubini's theorem, 
  \begin{equation}
    \int_{\mathbb{R}^d} \int_\Gamma \int_0^T 1_{N_t}(\gamma(t)) dt d\eta_x(\gamma) dx = 0.
  \end{equation}
  So for $\mathcal{L}^d$-almost every $x \in \mathbb{R}^d$,
  \begin{equation} \label{e.etax}
    \eta_x \left( \left\{ \gamma \in \Gamma, \int_0^T 1_{N_t}(\gamma(t))dt = 0 \right\}\right) = 1.
  \end{equation}
   Now, let $x \in \mathbb{R}^d$ be such that \eqref{e.liminfk} and \eqref{e.etax} are satisfied, and let $\gamma \in \Gamma$ be a point in the support of $\eta_x$ such that $\int_0^T 1_{N_t}(\gamma(t))dt = 0$. Then, for $\mathcal{L}^1$-almost every $t \in (0,T)$, we have $\gamma(t) \notin N_t$, thus for $\mathcal{L}^1$-almost every $t \in (0,T)$,
    \begin{equation}
        \lim_{k \to \infty} b_{\varepsilon_k}(t,\gamma(t)) = b(t,\gamma(t)). 
    \end{equation} 
    By dominated convergence, it follows that $ \lim_{k \to \infty} \int_0^T b_{\varepsilon_k}(t,\gamma(t)) dt = \int_0^T b(t,\gamma(t))dt$. And thus,
    \begin{equation} \label{e.limittime}
       \lim_{k \to \infty} |\gamma(t) - x - \int_0^t b_{\varepsilon_k}(\tau,\gamma(\tau)) d\tau |= |\gamma(t) - x - \int_0^t b(\tau,\gamma(\tau)) d\tau |.
    \end{equation}
    Let $(\varepsilon_{k(l)})_l$ be a subsequence of $(\varepsilon_k)$ along which the $\liminf$ in \eqref{e.liminfk} is reached. Combining \eqref{e.liminfk} with \eqref{e.limittime}, and taking the limit as $l \to \infty$, we obtain
    \begin{equation}
        \int_\Gamma |\gamma(t) - x - \int_0^t b(\tau, \gamma(\tau))d\tau|d\eta_x(\gamma)  = 0.
    \end{equation}
    In conclusion, for $\mathcal{L}^d$-almost every $x \in \mathbb{R}^d$,
    \begin{equation}
        \eta_x \left( \left\{\gamma \in \Gamma, \gamma(t) = x + \int_0^t b(\tau,\gamma(\tau))d\tau \right\} \right) = 1.
    \end{equation}
    Thus, for $\mathcal{L}^d$-almost every $x \in \mathbb{R}^d$,
    \begin{equation}
        \eta_x \left( \left\{\gamma \in \Gamma, \forall t \in [0,T] \cap \mathbb{Q}, \gamma(t) = x + \int_0^t b(\tau,\gamma(\tau))d\tau \right\} \right) = 1.
    \end{equation}
    Since, the functions in $\Gamma$ are continuous with respect to $t$ and $[0,T] \cap \mathbb{Q}$ is dense in $[0,T]$, we have,
    \begin{equation}
        \Gamma_x^b = \left\{\gamma \in \Gamma, \forall t \in [0,T] \cap \mathbb{Q}, \gamma(t) = x + \int_0^t b(\tau,\gamma(\tau))d\tau \right\}.
    \end{equation}
    Finally, we obtain $\eta_x(\Gamma - \Gamma_x^b) = 0$ for $\mathcal{L}^d$-almost every $x \in \mathbb{R}^d$.
\end{proof}

\section{Proof of the main result} \label{s.endproof}

The goal of this section is to use the Young measure $\eta$ built in Section \ref{s.constructionofx} to complete the proof of Theorem \ref{t.wuniqueness}. Recall that in Section \ref{s.firstineq} we have already shown the existence of $T \in (0,T^*)$ such that $f \geq u$ on $[0,T) \times \mathbb{R}^d$. Also recall that $u$ is $\mathcal{C}^{1,1}$ on $[0,T) \times \mathbb{R}^d$ and $W^t$ is a surjective function for $t < T$. Here we will try to prove the converse inequality $f \leq u$ using an argument similar to the one used to prove Lemma \ref{l.firstineq} in which the vector field $W$ will be replaced by $\eta$. This approach will only lead to a bound valid on the image of $(t,x) \mapsto (t,\gamma(t))$ for $\mathcal{L}^d$-almost every $x \in \mathbb{R}^d$ and $\eta_x$-almost every $\gamma \in \Gamma_x^b$. But, using this bound, we will be able to show that $\eta_x = \delta_{W(\cdot,x)}$, except on a negligible set. Then, using the surjectivity of $W^t$ for $t < T$, we can finish the proof of Theorem \ref{t.wuniqueness}. 

\begin{lemma} \label{l.f-u=0}
    For $\mathcal{L}^d$-almost every $x \in \mathbb{R}^d$, and $\eta_x$-almost every $\gamma \in \Gamma_x^b$, we have
    \begin{equation}
        \forall t \in [0,T], \; f(t,\gamma(t))= u(t,\gamma(t)).
    \end{equation}
\end{lemma}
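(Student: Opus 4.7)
The plan is to mimic the argument behind Lemma~\ref{l.firstineq}: for any $\gamma \in \Gamma_x^b$, I expect $t \mapsto (f-u)(t,\gamma(t))$ to be non-increasing. Since $(f-u)(0,\gamma(0)) = u_0(x) - u_0(x) = 0$, this would yield $(f-u)(t,\gamma(t)) \leq 0$, and combined with the lower bound $f \geq u$ from Lemma~\ref{l.firstineq} (extended to $[0,T]$ by continuity), the desired equality follows. At any point $(t,\gamma(t))$ where $f$ is differentiable with $\partial_t f = H(\nabla_x f)$ and $\gamma$ is differentiable, the chain rule and convexity of $H$ give
\[
    \frac{d}{dt}(f-u)(t,\gamma(t)) = H(\nabla_x f) - H(\nabla_x u) - \nabla H(\nabla_x f) \cdot (\nabla_x f - \nabla_x u) \leq 0,
\]
all quantities being evaluated at $(t,\gamma(t))$. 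Since $u$ is $\mathcal{C}^{1,1}$ on $[0,T) \times \mathbb{R}^d$, it is a classical solution of the PDE there, so no difficulty arises on the $u$ side.

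The nontrivial step is justifying this chain rule $\mathcal{L}^1$-a.e. in $t$ for the curves selected by $\eta_x$. Let $D \subset [0,T) \times \mathbb{R}^d$ be the set of points where $f$ fails to be differentiable or where $\partial_t f - H(\nabla_x f) \neq 0$. By Rademacher's theorem and the hypotheses on $f$, we have $\mathcal{L}^{d+1}(D) = 0$, hence $\mathcal{L}^d(D_t) = 0$ for $\mathcal{L}^1$-a.e. $t$, where $D_t = \{ x : (t,x) \in D\}$. Applying Lemma~\ref{l.limitanticoncentration} (approximating $1_{D_t}$ by continuous functions) for each such $t$ and integrating in time, Fubini's theorem yields
\[
    \int_{\mathbb{R}^d} \int_\Gamma \int_0^T 1_{D_t}(\gamma(t)) \, dt \, d\eta_x(\gamma) \, dx = 0,
\]
so for $\mathcal{L}^d$-a.e. $x$ and $\eta_x$-a.e. $\gamma \in \Gamma_x^b$, the set $\{ t : (t,\gamma(t)) \in D\}$ is $\mathcal{L}^1$-negligible. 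For such $(x,\gamma)$, since $\gamma$ is absolutely continuous, $t \mapsto (f-u)(t,\gamma(t))$ is absolutely continuous; its derivative is $\leq 0$ for a.e. $t$ by the computation above, and since it vanishes at $t = 0$, it remains non-positive throughout $[0,T]$.

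The main obstacle is exactly this anti-concentration bookkeeping: one must guarantee that the \emph{generic} characteristic $\gamma$ produced by the Young measure avoids the singular set $D$ for almost every time. This is precisely what Lemma~\ref{l.limitanticoncentration} is designed for, and it is what makes the Young measure construction of Section~\ref{s.constructionofx} indispensable rather than a merely technical convenience; everything else reduces to the convexity-based one-line computation that already powered Lemma~\ref{l.firstineq}.
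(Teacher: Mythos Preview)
Your proposal is correct and follows essentially the same approach as the paper: use Lemma~\ref{l.limitanticoncentration} and Fubini to show that $\eta_x$-generic curves avoid the singular set for almost every time, then run the convexity computation to obtain monotonicity and combine with Lemma~\ref{l.firstineq}. Your version is in fact slightly more careful in that you explicitly include in $D$ the set where the PDE fails (not just where $f$ is non-differentiable), and you spell out the absolute continuity of $t\mapsto(f-u)(t,\gamma(t))$; the paper takes these for granted.
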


\begin{proof}
    Fix $t \in (0,T)$, let 
    \begin{align*}
        \mathcal{D}_t &= \left\{ x \in \mathbb{R}^d, \textrm{$f$ is not differentiable at $(t,x)$}\right\} \subset \mathbb{R}^d \\
        A_t &= \left\{ (x,\gamma) \in \mathbb{R}^d \times \Gamma, \textrm{$f$ is not differentiable at $(t,\gamma(t))$} \right\} \subset \mathbb{R}^d \times \Gamma.
    \end{align*}
    By Rademacher's theorem, for $\mathcal{L}^1$-almost every $t \in (0,T)$, the set $\mathcal{D}_t$ is $\mathcal{L}^d$-negligible. Hence, according to Lemma \ref{l.limitanticoncentration}, for $\mathcal{L}^1$-almost every $t \in (0,T)$, the following holds, 
    \begin{equation} \label{e.f-uneg}
        \int_{\mathbb{R}^d} \int_\Gamma 1_{A_t}(x,\gamma) d\eta_x(\gamma) dx = \int_{\mathbb{R}^d} \int_\Gamma 1_{\mathcal{D}_t} (\gamma(t)) d\eta_x(\gamma)dx = 0.
    \end{equation}
    Integrating \eqref{e.f-uneg} with respect to $t \in (0,T)$ and invoking Fubini's theorem, we obtain 
    \begin{equation}
         \int_{\mathbb{R}^d} \int_\Gamma \int_0^T 1_{A_t}(x,\gamma) d\eta_x(\gamma) dx = \int_0^t \int_{\mathbb{R}^d} \int_\Gamma 1_{\mathcal{D}_t} (\gamma(t)) dt d\eta_x(\gamma)dx = 0.
    \end{equation}
    So, for $\mathcal{L}^d$-almost every $x \in \mathbb{R}^d$ and $\eta_x$-almost every $\gamma \in \Gamma_x^b$, $\int_0^T 1_{A_t}(x,\gamma) dt = 0$. In particular, for $\mathcal{L}^d$-almost every $x \in \mathbb{R}^d$ and $\eta_x$-almost every $\gamma \in \Gamma_x^b$, the function $t \mapsto (f-u)(t,\gamma(t))$ is differentiable $\mathcal{L}^1$-almost everywhere. Fix $x \in \mathbb{R}^d$ and $\gamma \in \Gamma_x^b$ such that for $\mathcal{L}^1$-almost every $t \in (0,T)$, $f$ is differentiable at $(t,\gamma(t))$. We have, for $\mathcal{L}^1$-almost every $t \in (0,T)$,
    \begin{align*}
        \frac{d}{dt} (f-u)(t,\gamma(t)) &= \partial_t f(t,\gamma(t)) - \partial_t u(t,\gamma(t)) + \gamma'(t) \cdot (\nabla f(t,\gamma(t)) - \nabla u(t,\gamma(t))) \\
        &= H(\nabla f) - H(\nabla u) - \nabla H(\nabla f) \cdot (H(\nabla f) - H(\nabla u ))(t,\gamma(t)) \\
        &= -(T_{\nabla f} H(\nabla u ) - H(\nabla u))(t,\gamma(t)) \\
        &\leq 0,
    \end{align*}
    where $T_pH(q) = H(p) + \nabla H(p) \cdot(q-p)$ is the tangent of $H$ at $p$ evaluated at $q$. For this choice of $x$ and $\gamma$, the function $t \mapsto (f-u)(t,\gamma(t))$ vanishes at zero and is non-increasing. In addition, according to Lemma \ref{l.firstineq} the function $t \mapsto (f-u)(t,\gamma(t))$ is non-negative. Therefore, for every $t \in (0,T)$, $f(t,\gamma(t)) = u(t,\gamma(t))$.
\end{proof}

\begin{lemma} \label{l.nablaeq}
     For $\mathcal{L}^d$-almost every $x \in \mathbb{R}^d$, $\eta_x$-almost every $\gamma \in \Gamma_x^b$, and $\mathcal{L}^1$-almost every $t \in (0,T)$, the function $f$ is differentiable at $(t,\gamma(t))$ and we have
    \begin{equation}
        \nabla f(t,\gamma(t)) = \nabla u(t,\gamma(t)).
    \end{equation}
\end{lemma}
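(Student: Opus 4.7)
I would deduce this lemma from Lemma~\ref{l.f-u=0} together with Lemma~\ref{l.firstineq} via a first-order optimality argument. The key observation is that Lemma~\ref{l.firstineq} gives $f - u \geq 0$ on the whole of $[0,T) \times \mathbb{R}^d$, while Lemma~\ref{l.f-u=0} gives, for $\mathcal{L}^d$-almost every $x \in \mathbb{R}^d$ and $\eta_x$-almost every $\gamma \in \Gamma_x^b$, the identity $f(t,\gamma(t)) = u(t,\gamma(t))$ for every $t \in [0,T]$. Combining these, for such $(x,\gamma)$ and any $t \in (0,T)$, the point $(t,\gamma(t))$ is an interior global minimizer of the non-negative function $f - u$ on the open set $(0,T) \times \mathbb{R}^d$.

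The next step is to pin down differentiability of $f$ at these points. The proof of Lemma~\ref{l.f-u=0} already established, after discarding a further $\eta_x \otimes \mathcal{L}^d$-negligible set of pairs $(x,\gamma)$, that $f$ is differentiable at $(t,\gamma(t))$ for $\mathcal{L}^1$-almost every $t \in (0,T)$; this is exactly the complement of the set $A_t$ used there. Since $u$ is $\mathcal{C}^{1,1}$ on $[0,T) \times \mathbb{R}^d$, $u$ is differentiable at every interior point, and hence so is $f - u$ at each such $(t,\gamma(t))$.

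With differentiability in hand, I would apply the first-order necessary condition for an interior minimum to conclude $\nabla(f-u)(t,\gamma(t)) = 0$, which is exactly $\nabla f(t,\gamma(t)) = \nabla u(t,\gamma(t))$ (recalling that $\nabla$ denotes the full space-time gradient, so this simultaneously yields equality of both $\partial_t$ and $\nabla_x$ derivatives). There is no substantive obstacle in this lemma: the analytic work has been carried out upstream in Lemmas~\ref{l.firstineq} and~\ref{l.f-u=0}, and this statement is essentially their joint corollary through classical smooth-minimization bookkeeping. The only care required is to intersect the various full-measure sets of $(x,\gamma)$ and $t$ on which the previous statements hold, which poses no difficulty since each is obtained from a countable-intersection or Fubini argument.
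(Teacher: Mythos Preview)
Your argument is correct and in fact takes a different route from the paper. The paper observes that, by Lemma~\ref{l.f-u=0}, the function $t \mapsto (f-u)(t,\gamma(t))$ is identically zero, so its derivative along the curve vanishes; at points where $f$ is differentiable this derivative equals $-(T_{\nabla_x f}H(\nabla_x u) - H(\nabla_x u))(t,\gamma(t))$, and the \emph{strict} convexity of $H$ then forces $\nabla_x f = \nabla_x u$ (with equality of the time derivatives following from the PDE). Your approach instead combines Lemma~\ref{l.firstineq} ($f-u \geq 0$ globally) with Lemma~\ref{l.f-u=0} ($f-u = 0$ along $\gamma$) to recognize $(t,\gamma(t))$ as an interior minimizer of $f-u$, and applies first-order optimality directly. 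This is more elementary: it delivers the full space-time gradient equality in one stroke and does not invoke the strict convexity of $H$ at this step. The paper's route, by contrast, stays within the characteristic calculus already set up in the proof of Lemma~\ref{l.f-u=0} and makes the role of strict convexity visible, but yours is a perfectly valid and arguably cleaner shortcut here.
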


\begin{proof}
    Let $x \in \mathbb{R}^d$ and $\gamma \in \Gamma_x^b$ such that the previous lemma holds. The function $t \mapsto (f-u)(t,\gamma(t))$ is constant equal to zero, therefore it is differentiable, and its derivative is equal to zero. In addition, inspecting the proof of the previous lemma, we see that for $\mathcal{L}^1$-almost all $t \in (0,T)$, $f$ and $u$ are differentiable at $(t,\gamma(t))$. For every $t \in (0,T)$ such that $f$ and $u$ are differentiable at $(t,\gamma(t))$, we have
    \begin{equation}
        0 = \frac{d}{dt} (f-u)(t,\gamma(t)) =  -(T_{\nabla f} H(\nabla u ) - H(\nabla u))(t,\gamma(t)).
    \end{equation}
    By strict convexity of $H$, we obtain $\nabla f (t,\gamma(t)) = \nabla u(t,\gamma(t))$.
\end{proof}

\begin{proof}[Proof of Theorem \ref{t.wuniqueness}]
Let $x \in \mathbb{R}^d$ and $\gamma \in \Gamma_x^b$ be such that Lemma \ref{l.nablaeq} holds. We have for every $t \in (0,T)$,
\begin{align*}
    \gamma(t) &= x - \int_0^t \nabla H(\nabla f (\tau, \gamma(\tau)))d\tau \\
    &= x - \int_0^t \nabla H(\nabla u (\tau, \gamma(\tau)))d\tau.
\end{align*}
Therefore $\gamma$ is an integral solution of \eqref{e.ODEu} with initial condition $x$, so by Lemma \ref{l.unicity} we have $\gamma = W(\cdot,x)$. So, for $\mathcal{L}^d$-almost every $x \in \mathbb{R}^d$, $\eta_x$ is supported on $\{W(\cdot,x)\}$, this means that $\eta_x = \delta_{W(\cdot,x)}$. Applying Lemma \ref{l.f-u=0} we obtain, for $\mathcal{L}^d$-almost every $x \in \mathbb{R}^d$ and every $t \in (0,T)$ that,
\begin{equation}
    f(t,W(t,x)) = u(t,W(t,x)).
\end{equation}
By continuity of $W$, $f$ and $u$, the identity in the above display is actually true for every $x \in \mathbb{R}^d$. Finally, according to Lemma \ref{l.propw}, for every $t \in [0,T)$ the map $W(t,\cdot)$ is surjective, thus $f=u$ on $[0,T) \times \mathbb{R}^d$.  
\end{proof}

\section{A counter-example for non-convex nonlinearities} \label{s.counterexample}

The aim of this section is to show that when the nonlinearity is not assumed to be convex, Theorem \ref{t.uniqueness} no longer holds. We will consider the conservation law naturally associated with \eqref{e.HJ}. In order to keep our notation consistent with the literature on conservation laws, we choose to write this equation with the $+$ sign convention rather than the possibly more natural choice of writing it with the $-$ sign convention. To construct a counter-example in the absence of convexity, we will construct a Lipschitz initial condition $v_0 : \mathbb{R} \to \mathbb{R}$ and a $\mathcal{C}^2$ non-convex $H : \mathbb{R} \to \mathbb{R}$ such that the initial value problem
\begin{equation} \label{e.conservation}
    \begin{cases} 
\partial_t v + \partial_x \left( H(v) \right) = 0, \\ v(0,\cdot) = v_0,
\end{cases}
\end{equation} 
admits a non-entropy solution. We say that a function $\phi : \mathbb{R} \to \mathbb{R}$ is one-sided Lipschitz when there exists $c \geq 0$ such that, for all $x \geq y$ :
\begin{equation}
    \phi(x) - \phi(y) \geq - c(x-y). 
\end{equation}
There is a correspondence between the solutions of \eqref{e.conservation} and the solutions of the following Hamilton-Jacobi initial value problem,
\begin{equation} \label{e.hj}
    \begin{cases} 
\partial_t u - \overline{H}( \partial_x u)  = 0, \\ u(0,\cdot) = u_0,
\end{cases}
\end{equation} 
where $\overline{H}(p) = H(-p)$. Here, the nonlinearity we build is even, so we simply have $\overline{H} = H$. The correspondence is given by $v \mapsto -\int_{-\infty}^\cdot v(\cdot,y)dy$ and $u \mapsto -\partial_x u$. This correspondence sends the viscosity solution of \eqref{e.hj} to the entropy solution of \eqref{e.conservation} and vice versa \cite[Theorem 16.1]{PLL}, \cite[Section 2]{viscocl}. Hence, building a bounded, one-sided Lipschitz non-entropy solution $g$ of \eqref{e.conservation} with Lipschitz initial condition yields a semi-concave non-viscosity solution $f$ of \eqref{e.hj} with $\mathcal{C}^{1,1}$ initial condition. This function $f$ satisfies all the hypotheses of Theorem \ref{t.uniqueness} but is not a viscosity solution.

We define a non-convex even function $H$ by
\begin{equation} \label{e.hamiltonian}
   H(p) =  \begin{cases}
     \frac{5}{4}p^3+ \frac{19}{8} p^2+ \frac{15}{16} p+\frac{5}{32} &\textrm{ if } p \leq -1/2 \\ 
    \frac{1}{2}p^2 &\textrm{ if } -1/2 \leq p \leq 1/2 \\ 
     -\frac{5}{4}p^3+ \frac{19}{8} p^2 - \frac{15}{16} p+\frac{5}{32} &\textrm{ if } 1/2 \leq p, 
    \end{cases}
\end{equation}
and a Lipschitz initial condition
\begin{equation}
    v_0(x) = \begin{cases}
    -3/2 &\textrm{ if } x \leq -3/2 \\
    x &\textrm{ if } -3/2 \leq x \leq 3/2 \\
    3/2 &\textrm{ if } 3/2 \leq x \leq L \\
    -x + L +3/2 &\textrm{ if } L \leq x \leq L+1\\
    1/2 &\textrm{ if } L+1\leq x.
    \end{cases}
\end{equation}
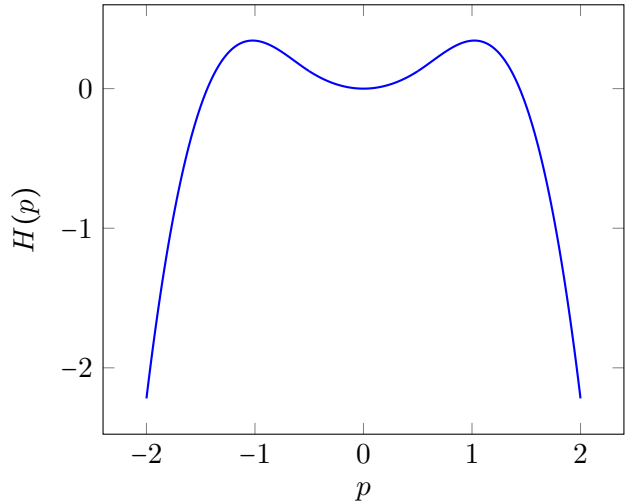
\begin{figure}
    \begin{tikzpicture}
    \begin{axis}[xlabel = {$p$},ylabel = {$H(p)$}]
         \addplot[domain = -2:-1/2, 
         samples = 200,
        smooth,
        thick,
        blue,] {5/4*x^3 + 19/8*x^2 + 15/16*x+5/32};
        \addplot[domain = -1/2:1/2, 
         samples = 200,
        smooth,
        thick,
        blue,] {1/2*x^2};
        \addplot[domain = 1/2:2, 
         samples = 200,
        smooth,
        thick,
        blue,] {-5/4*x^3 + 19/8*x^2 - 15/16*x+5/32};
    \end{axis}
\end{tikzpicture}
    \caption{Graph of the non-convex function $H$}
    \label{f.H}
\end{figure}
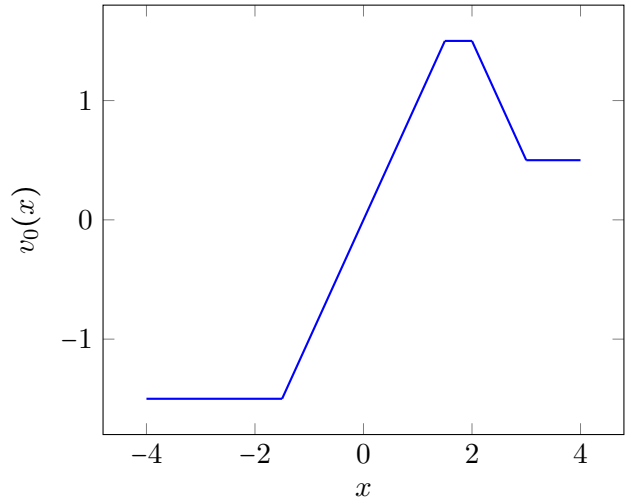
\begin{figure}
    \begin{tikzpicture}
    \begin{axis}[xlabel = {$x$},ylabel = {$v_0(x)$}]
         \addplot[domain = -4:-3/2, 
         samples = 200,
        smooth,
        thick,
        blue,] {-3/2};
        \addplot[domain = -3/2:3/2, 
         samples = 200,
        smooth,
        thick,
        blue,] {x};
        \addplot[domain = 3/2:2, 
         samples = 200,
        smooth,
        thick,
        blue,] {3/2};
        \addplot[domain = 2:3, 
         samples = 200,
        smooth,
        thick,
        blue,] {-x+7/2};
        \addplot[domain = 3:4, 
         samples = 200,
        smooth,
        thick,
        blue,] {1/2};
    \end{axis}
\end{tikzpicture}
    \caption{Graph of the Lipschitz initial condition $v_0$ for $L = 2$}
    \label{f.v_0}
\end{figure}
\noindent The exact value of the parameter $L$ is to be fixed later, for now we imagine that $L$ is large, for example $L > 30 \times H'(-3/2) $, so that the effect of the asymmetry of $v_0$ is felt close to the origin only after a long enough time. The characteristic curves $(t,X^t(x))$ of \eqref{e.conservation} satisfy $X^t(x) = x + tH'(v_0(x))$. Setting $a = H'(-3/2) = -H'(3/2) > 0$, explicitly we have,
\begin{equation}
    X^t(x) = \begin{cases}
    x + ta &\textrm{ if } x \leq -3/2\\
    x+tH'(x) &\textrm{ if } -3/2 \leq x \leq -1/2\\
    x+tx &\textrm{ if } -1/2 \leq x \leq 1/2 \\
   x+tH'(x) &\textrm{ if } 1/2 \leq x \leq 3/2 \\
    x-ta &\textrm{ if } 3/2 \leq x \leq L \\
    x+tH'(3/2 + L-x) &\textrm{ if } L \leq x \leq L+1 \\
    x+tH'(1/2) &\textrm{ if } L+1 \leq x
    \end{cases}
\end{equation}
\begin{figure}
    \centering
    \includegraphics[scale = 0.25]{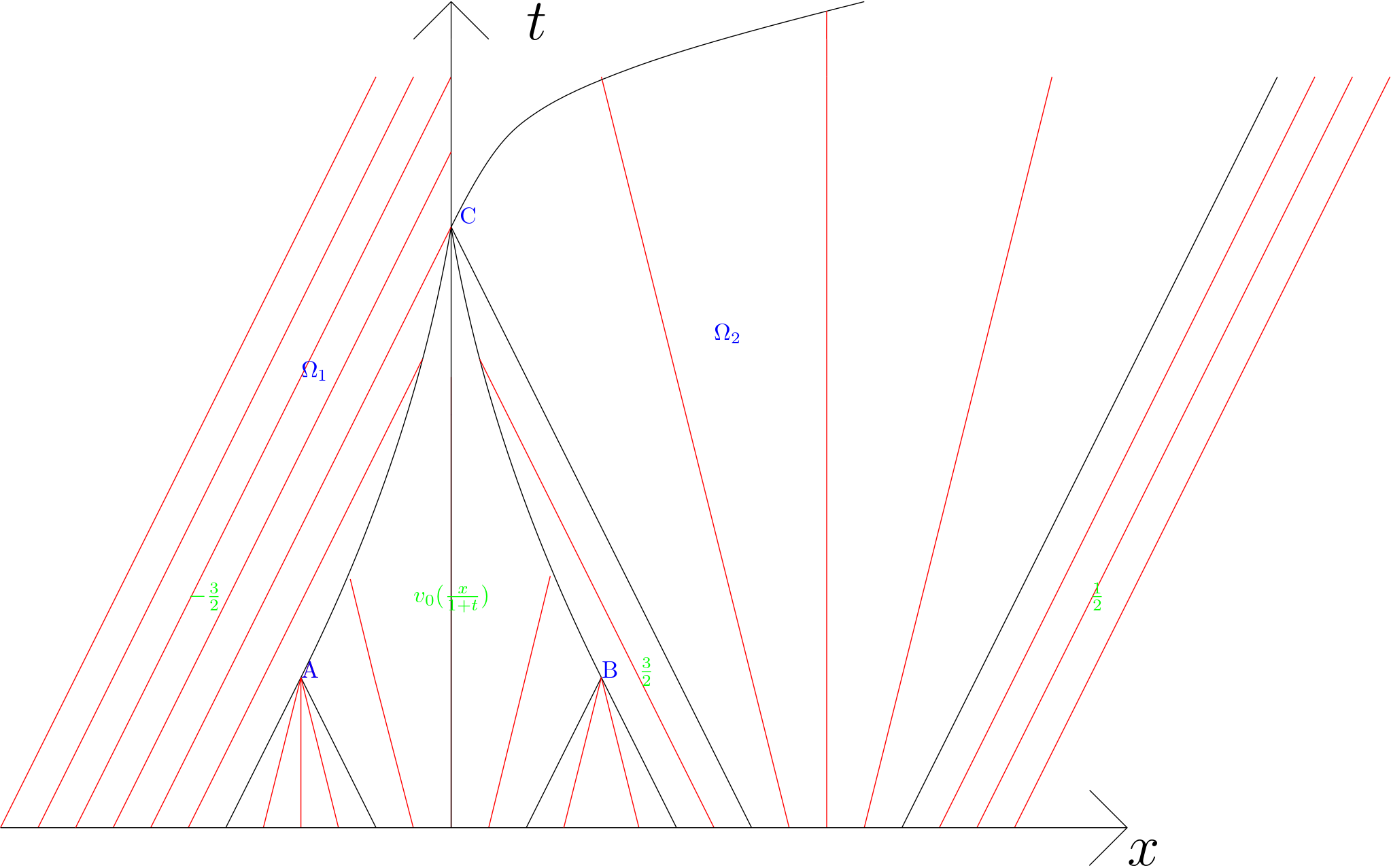}
    \caption{Graph of $(x,t) \mapsto (x+tH(v_0(x)),t)$ when $L=at_1$}
    \label{fig:my_label}
\end{figure}
\noindent The characteristic lines are drawn on Figure \ref{fig:my_label}, along with three curves started at the points $A$, $B$ and $C$. we can build a solution $v$ of \eqref{e.conservation} using the characteristic lines. Given $(t,y) \in \mathbb{R}_+\times \mathbb{R}$ we set $v(t,y) = v_0(x)$ where $y = X^t(x)$ and $s \mapsto X^s(x)$ is the unique characteristic curve that reaches $(t,y)$ on Figure \ref{fig:my_label}.

We start by checking that the solution $v$ we have built satisfies the Rankine-Hugoniot condition and that the curves along which the discontinuities evolve look like the curves on figure \ref{fig:my_label}. The function $v$ exhibits discontinuities, they appear at time $t = t_0$ at $x = -x_0$ and $x = x_0$ where $x_0 = 3/2-  \frac{a}{a+1/2} \simeq  0.68$ and $t_0 = \frac{1}{a+1/2} \simeq 0.36$ (point $A$ and $B$ on Figure \ref{fig:my_label}). Those discontinuities continue to exist at times $t > t_0$ along some curves $\Gamma_A = \{ (t,z_A(t)) \}$ and $\Gamma_B = \{ (t,z_B(t)) \}$. Furthermore, $z_A$ and $z_B$ satisfy the Rankine-Hugoniot condition, that is 
\begin{equation}
    [H(v)]_A = \frac{dz_A}{dt} [v]_A,
\end{equation}
\begin{equation}
    [H(v)]_B = \frac{dz_B}{dt} [v]_B,
\end{equation}
where $[\cdot]_A$ and $[\cdot]_B$ respectively denote the jump of $\cdot$ along $\Gamma_A$ and $\Gamma_B$. Recall that if $g$ is a piecewise continuous function whose discontinuities lie along a curve $\Gamma = \{ (z(t),t)\}$, then the jump of $g$ along $\Gamma$ denoted $[g]$ is defined by 
\begin{equation}
    [g] = \lim_{\omega_+} g  -  \lim_{\omega_-} g,
\end{equation}
where $\omega_- = \{ x < z(t)\}$ and $\omega_+ = \{ x > z(t)\}$ for simplicity we write $g_\pm$ instead of $\lim_{\omega_\pm} g$. Using the expression of $v$ in terms of $v_0$ in each region,
\begin{align}
    \frac{dz_A}{dt} = \frac{ H((\frac{z_A}{1+t})) - H(-3/2)}{\frac{z_A}{1+t}- (-3/2)} \geq 0, \\ 
    \frac{dz_B}{dt} = \frac{H(3/2) - H(\frac{z_B}{1+t}) }{3/2 - \frac{z_B}{1+t}} \leq 0.
\end{align}
In particular, $z_A$ is a non-decreasing function of time, meaning $\Gamma_A$ is curved to the right. Furthermore, let $g(p)$ denote the slope between $(-3/2,H(-3/2))$ and $(p,H(p))$, we have $z'_A(t) = g(z_A(t))$, in addition there exists $m > 0$ such that for all $p \in [-x_0,0]$ we have $g(p) \geq m$. In particular, $z_A'(t) \geq m > 0$ so $z_A$ goes to $0$ at a speed greater than some $m >0$. The curve $\Gamma_B$ is the symmetrical of the curve $\Gamma_A$ with respect to the vertical axis, so similar observations can be made for the function, $z_B$ in particular $z_B$ also goes to $0$ at a speed greater than $m > 0$. In particular, this shows that as time elapses, the two discontinuities move toward each other in the $(t,x)$-space and the speed at which they move toward each other is bounded from below by a positive constant. Thus, the two discontinuities meet at some point $C$ at a time, $t_1$ and by symmetry the point $C$ must lie on the vertical axis. Choosing $L = at_1$, the first characteristic started from $x \in [L,L+1]$ to hit the vertical axis is $X^t(L) = L + th(3/2)$ which hits the vertical axis at $t_2 = \frac{L}{-H'(3/2)} = t_1$. Hence, the influence of the asymmetry of the initial condition $v_0$ is felt by the discontinuities only after the moment they meet at the point $C$. Before time $t_1$ the discontinuities appearing at $A$ and $B$ behave as if the initial condition was
\begin{equation}
    v_0(x) = \begin{cases}
    -3/2 &\textrm{ if } x \leq -3/2, \\
    x &\textrm{ if } -3/2 \leq x \leq 3/2, \\
    3/2 &\textrm{ if } 3/2 \leq x. 
    \end{cases}
\end{equation}
In particular, by symmetry, the point $C$ must lie on the vertical axis. After time $t_1$ only one discontinuity remains and it evolves along the curve $\Gamma_C = \{ (z_C(t),t) \}$. The Rankine-Hugoniot condition imposes that after time $t_1$, the point $C$ evolves along the curve $\Gamma_C  = \{(z_C(t),t) \}$ where $z_C$ satisfies
\begin{equation}
    [H(v)]_C = \frac{dz_C}{dt} [v]_C.
\end{equation}
After time $t_1$, along $\Gamma_C$ we have $v_+ = 3/2 + L - x $ for some $x \in [L,L+1]$ so $v_+ \in [1/2,3/2]$ hence $z_C'(t)$ is non-negative and $\Gamma_C$ leans to the right. As the discontinuity deviates to the right, $v_+$ becomes closer to $1/2$ and $z_C'(t)$ becomes bigger. Thus, the speed at which the discontinuity moves away from the vertical axis is an increasing function of time. Hence, at some time $t_3$ we have $z_C(t_3) = z$ where $z$ is the maximum of $H$ on $[1/2,3/2]$.

Note that the jumps along $\Gamma_A$, $\Gamma_B$ and $\Gamma_C$ of $v$ are all non-negative, so the space derivative of $v$ is bounded below (but not above) at all times by a constant that does not depend on time. In other words, there exists a constant $c$ such that the corresponding solution $f$ of \eqref{e.hj} satisfies : $x \mapsto c|x|^2 - f(t,x)$ is convex for all $t \geq 0$. Furthermore, for every, $(y,t) \in \mathbb{R} \times \mathbb{R}_+$ there exists $x \in \mathbb{R}$ such that $v(t,y) = v_0(x)$ so $v$ is a bounded function and $\|v\|_{L^\infty} \leq \| v_0 \|_{L^\infty}$. Let us now show that $v$ is not an entropy solution. By contradiction, suppose that $v$ is an entropy solution of \eqref{e.conservation}, the curve $\Gamma_C$ must satisfy the following additional condition \cite[Proposition 2.3.7]{conservationlaws},
\begin{equation} \label{e.entropy}
    [F(v)] \leq \frac{dz_C}{dt} [E(v)]
\end{equation}
for every entropy-entropy flux pair $(E,F)$, that is pairs of function $(E,F)$ such that $E$ is convex and $F' = H'E'$. It is well known that it is necessary and sufficient to check \eqref{e.entropy} only for pairs $(E,F)$ where $E = |x-k|$, $k \in \mathbb{R}$ for, $v$ to be an entropy solution \cite[Proposition 2.3.7]{conservationlaws}. In particular, $v$ is an entropy solution if and only if \eqref{e.entropy} holds for all pairs $(E,F)$ with $E = (x-k)_+$ and $E = (x-k)_-$, $k \in \mathbb{R}$. Hence, if $v$ is an entropy solution, we have for all $k \in (v_-,v_+)$,
\begin{equation} \label{e.slope}
    \frac{H(v_+) - H(k)}{v_+-k} \leq \frac{H(v_+) - H(v_-)}{v_+ - v_-} \leq \frac{H(k) - H(v_-)}{k-v_-}.
\end{equation}
The condition \eqref{e.slope} is not satisfied by the solution $v$ that we built. Indeed, recall that $z$ denotes the maximum of $H$ on $[1/2,3/2]$, the point $(-3/2,H(-3/2))$ is strictly above the line defined by the points, $(0,H(0))$ and $(z,H(z))$ this means that, 
\begin{equation} \label{e.contradiction}
    \frac{H(z) -H(0)}{z-0} > \frac{H(z) -H(-3/2)}{z-(-3/2))}.
\end{equation}
But \eqref{e.contradiction} contradicts the inequality on the left-hand side of \eqref{e.slope} for $k = 0$ and time $t = t_3$. Hence, the solution $v$ that we have built is not an entropy solution and the associated solution of \eqref{e.hj} satisfies all the hypothesis of Theorem \ref{t.uniqueness} but is not a viscosity solution.

Finally, to highlight the challenge posed by this counter-example, let us define a two species spin model with a double-well covariance function resembling Figure \ref{f.H}. Let $\sigma = (\sigma_1,\sigma_2) \in \{-1,1\}^N \times \{-1,1\}^N$ and define 
\begin{equation}
    E_N(\sigma) = \frac{1}{\sqrt{N}}\sum_{i,j=1}^N J^1_{ij} \sigma_{1i} \sigma_{1j} + \frac{1}{\sqrt{N}} \sum_{i,j=1}^N J^2_{ij} \sigma_{2i} \sigma_{2j} + \frac{\sqrt{6}}{N^{3/2}}\sum_{i,j,k,l=1}^N J_{ijkl} \sigma_{1i} \sigma_{1j} \sigma_{2k} \sigma_{2l},
\end{equation}
where the $J^1_{ij}$'s, $J^2_{ij}$'s and the $J_{ijkl}$'s are iid standard Gaussian random variables. For every $\sigma, \tau \in \{-1,1\}^N \times \{-1,1\}^N$, we have 
\begin{equation} \label{e.model}
    \mathbb{E} \left[ E_N(\sigma) E_N(\tau) \right] = N \theta \left( \frac{\sigma_1 \cdot \tau_1}{N},  \frac{\sigma_2 \cdot \tau_2}{N}\right),
\end{equation}
where $\theta(x,y) = x^2+y^2+6x^2y^2$. The covariance function of the SK model is the square function, which is the function appearing in \eqref{e.HJW}. The same holds true for more general models, in the context of \eqref{e.model}, the behavior of the nonlinearity will be governed by the behavior of the function $\theta$. If we plot a slice of the function $\theta$ along the line $t \mapsto (1-t,t)$ we see that the function $\theta$ has a double-well structure. This is the same pathological behavior than the one exhibited by \eqref{e.hamiltonian}. The existence of a spin model with such a covariance function is problematic for generalizing this approach of Parisi formula in the non-convex case. It means, that the behavior of the counter-example we built cannot be easily ruled out for weak solutions arising in the context of spin glasses. Nonetheless, it was recently proven \cite[Theorem 1.1]{wavefront}, that even for non-convex models the limit free energy, if it exists, stays in the wavefront.

\begin{figure}[h]
    \begin{tikzpicture}[scale = 1.10]
    \begin{axis}[xlabel = {$t$},ylabel = {$-\theta(t,1-t)$},yticklabels={,,},xticklabels={,,}]
         \addplot[domain = 0:1, 
         samples = 200,
        smooth,
        thick,
        blue,]{-x^2-(1-x)^2-6*x^2*(1-x)^2};
    \end{axis}
\end{tikzpicture}
\end{figure}

\section*{Acknowledgement}
I warmly thank Jean-Christophe Mourrat, Pierre Cardaliaguet and Stefano Bianchini for the help they provided during the conception and the writing of this paper.

\bibliographystyle{abbrv}
{\small
\bibliography{ref}
}

\end{document}